\documentclass[10pt]{amsart}
\usepackage{amssymb}
\usepackage{bm}
\usepackage{graphicx}
\usepackage[centertags]{amsmath}
\usepackage{amsfonts}
\usepackage{amsthm}
\usepackage{graphicx}
\linespread{1.18}

%-----------------------THEOREM Environment-------------------------%

\newtheorem{thm}{Theorem}
\newtheorem{cor}[thm]{Corollary}
\newtheorem{lem}[thm]{Lemma}

\newtheorem{prop}[thm]{Proposition}

\newtheorem{fact}[thm]{Fact}

\newtheorem{defn}[thm]{Definition}
\theoremstyle{definition}

%-----

\newcommand{\ee}{\varepsilon}
%-----

%-----

%-----
\newcommand{\meg}{\geqslant}
\newcommand{\mik}{\leqslant}

%-----
\newcommand{\con}{\smallfrown}

\makeindex

\begin{document}

\title{Oscilation stability for continuous monotone surjections}
\author{Stevo Todorcevic}
\author{Konstantinos Tyros}

\address{Department of Mathematics, University of Toronto, Toronto, Canada, M5S 2E4. Institut de Math\'ematiques de Jussieu, UMR 7586, 2 pl. Jussieu, case 7012, 75251 Paris Cedex 05, France. }
\email{stevo@math.toronto.edu. stevo@math.jussieu.fr}
\address{and}
\address{Department of Mathematics, University of Toronto, Toronto, Canada, M5S 2E4 }
\email{ktyros@math.toronto.edu}

\thanks{\textit{Key words}: Dual Ramsey Theory, Ramsey Degree, Cantor set.}

\thanks{Research supported by grants from NSERC and CNRS.}

\begin{abstract}
 We prove that for every integer $b\geqslant 2$ and positive real $\varepsilon$ there exists a finite number $t$ such that for every finite coloring of the nondecreasing surjections from $b^\omega$ onto $b^\omega$, there exist $t$ many colors such that their $\varepsilon$-fattening contains a cube.
\end{abstract}
\maketitle

\section{Introduction}

Recall the statement of the dual Ramsey theorem for infinite partitions of $\omega$ (see \cite{CS} or \cite{To}): For every finite Borel coloring of the space $C^r_{\text{sur}}(\omega)$
of all rigit surjections from $\omega$ onto $\omega$ there is a rigit surjection $h:\omega\rightarrow\omega$ such that the set
$$C^r_{\text{sur}}(\omega)\upharpoonright h=\{f\circ h: f\in C^r_{\text{sur}}(\omega)\}$$
is monochromatic. In this note we examine this kind of dual Ramsey statement with the index-set $\omega$ replaced by the Cantor set $2^\omega,$ or more generally,
powers of the form $b^\omega$ for $b$ any positive integer. More precisely, we focus on the space $C^\uparrow_{\text{sur}}(b^\omega)$ of all nondecreasing surjections and we examine to which extend a similar result holds. Unlike the dual Ramsey theorem, in our case the structure under consideration admits a Ramsey degree and this degree can be realized only in an approximate sense. In Section \ref{Aprox_sec} we establish the necessity of the approximations, while in Section \ref{deg_sec} we prove that the Ramsey degree provided by the statement of the main result (Theorem \ref{main_thm} below) is the best possible. However, to state our result precisely we need some notation.

 By $\omega$, we denote the set of the natural numbers starting from 0. For every $k$ in $\omega$, $k$ also stands for the set of the natural numbers strictly less that $k$. For $b,k\in\omega$, by $b^k$ (resp. $b^{<k}$) we denote the set of all sequences in $b$ of length $k$ (resp. strictly less than $k$) and by $b^\omega$ (resp. $b^{<\omega}$) we denote the set of all sequences in $b$ of infinite (resp. finite) length.
For $2\mik b<\omega$, it is well known that the space $b^\omega$ is a metrizable compact space. Throughout this note we will consider the following metric witnessing this fact. For every distinct $x,y$ in $b^\omega$, we set $\rho_{b}(x,y)=2^{-n_0}$ where $n_0=\min\{n<\omega:x(n)\neq y(n)\}$. Moreover we endow the set $b^\omega$ with the lexicographical order $\leq_\text{lex}$, i.e. for $x,y\in b^\omega$, we write $x\leq_\text{lex}y$ if either $x=y$ or $x(n_0)<y(n_0)$ where $n_0=\min\{n<\omega:x(n)\neq y(n)\}$. Then $(b^\omega,\leq_\text{lex})$ is a linearly ordered set. Similarly the lexicographical order $\leq_\text{lex}$ is defined on $\sqsubseteq$-incomparable pairs of $b^{<\omega}$ inducing a linear order on every subset of $b^{<\omega}$ consisting of pairwise $\sqsubseteq$-incomparable elements.

We are interested in the following subspace of the continuous maps from $b^\omega$ into itself
\[C^\uparrow_{\text{sur}}(b^\omega)=\{f:b^\omega\to b^\omega:f\;\text{is continuous, onto and nondecreasing}\},\]
where by nondecreasing we mean $f(x)\leq_\text{lex} f(y)$ whenever $x\leq_\text{lex} y$.
We endow $C^\uparrow_{\text{sur}}(b^\omega)$ with the following metric
\[\rho_\infty(f,g)=\sup\{\rho_b(f(x),g(x)):x\in b^\omega\}\]
for all $f,g\in C^\uparrow_{\text{sur}}(b^\omega)$.
Finally, let us recall the sequence of the odd tangent numbers $(t_k)_{k=1}^\infty$ defined by $t_k=\tan^{2k-1}(0)$ for every positive integer $k$.
The main result of this note is the following.

\begin{thm}
\label{main_thm}
  Let $b\in\omega$ with $b\meg2$. Then for every positive real $\ee$ there exists a positive integer $t=t(\ee)$ such that for every positive integer $K$ and every coloring $c:C^\uparrow_{\text{sur}}(b^\omega)\to K$ there exist $h\in C^\uparrow_{\text{sur}}(b^\omega)$ and $B\subseteq K$ with at most $t$ elements such that for every $f\in C^\uparrow_{\text{sur}}(b^\omega)$ there exists $g\in C^\uparrow_{\text{sur}}(b^\omega)$ satisfying $\rho_\infty(g,f\circ h)<\ee$ and $c(g)\in B$. In particular, $t=t(\ee)=t_{b^k-1}$ with $k=\lfloor\log_2(1/\ee)\rfloor+1$.
\end{thm}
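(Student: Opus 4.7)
The plan has three stages: discretize to reduce to a finite combinatorial coloring problem at level $k$; apply a dual-Ramsey-type theorem to produce the monotone surjection $h$; and count the resulting combinatorial types to obtain the bound $t_{b^k-1}$.

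\emph{Discretization.} Set $k=\lfloor\log_2(1/\ee)\rfloor+1$, so that $2^{-k}<\ee$. Writing $\pi_k\colon b^\omega\to b^k$ for the projection onto the first $k$ coordinates, one checks that two maps $f,g\in C^\uparrow_{\mathrm{sur}}(b^\omega)$ satisfy $\rho_\infty(f,g)<2^{-k}$ exactly when $\pi_k\circ f=\pi_k\circ g$. Hence the $\ee$-fattening of each color class of $c$ is a union of fibers of the map $f\mapsto\pi_k\circ f$, and the problem reduces to coloring the set $\Phi_k$ of nondecreasing continuous surjections $\varphi\colon b^\omega\to b^k$, equivalently, the set of ordered partitions of $b^\omega$ into $b^k$ lex-consecutive nonempty clopen pieces.

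\emph{Ramsey reduction.} Every $\varphi\in\Phi_k$ is determined, at a sufficiently deep level $m\meg k$, by a finite nondecreasing surjection $\bar\varphi\colon b^m\to b^k$. A finite dual Ramsey statement for such nondecreasing tree surjections (in the spirit of the Graham-Rothschild theorem, but adapted to the nondecreasing/lex-consecutive setting), combined with a diagonalization across levels of $b^{<\omega}$, produces $h\in C^\uparrow_{\mathrm{sur}}(b^\omega)$ such that the color $c(\pi_k\circ f\circ h)$ depends only on a finite combinatorial \emph{type} $\tau(\pi_k\circ f\circ h)$: the isomorphism class of the subtree of $b^{<\omega}$ spanned by the $b^k-1$ boundaries between consecutive partition blocks, recorded together with the left/right branching pattern along the meets of these boundaries.

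\emph{Type count.} The $b^k-1$ boundaries form a finite antichain in $b^{<\omega}$; their pairwise meets, listed in the natural recursive order, record a sequence of left/right choices that alternates from one level of the tree to the next. A bijection in the spirit of Andre's classical encoding then identifies the set of types with the set of alternating permutations of length $2(b^k-1)-1$, whose cardinality is exactly the odd tangent number $t_{b^k-1}$. Combining the three stages yields the advertised bound $|B|\mik t_{b^k-1}$.

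The main obstacle will be the Ramsey reduction step: producing a single $h$ valid simultaneously for every $f$ requires either a Fraisse-type amalgamation of nondecreasing tree surjections or a Halpern-Lauchli-style variant tailored to the nondecreasing (rather than merely rigid) setting. The combinatorial type count in the final step, while elegantly captured by the tangent numbers, also requires some care in setting up the bijection with alternating permutations.
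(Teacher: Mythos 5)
Your discretization step is essentially the paper's Lemma \ref{being_close}: with $k=\lfloor\log_2(1/\ee)\rfloor+1$, the open $\ee$-ball around $f$ is exactly the set of $g$ inducing the same level-$k$ partition, so the problem does reduce to the induced level-$k$ data. The gap is in your second stage. You invoke ``a finite dual Ramsey statement for nondecreasing tree surjections (in the spirit of Graham--Rothschild), combined with a diagonalization'' to get an $h$ for which the color depends only on a combinatorial type, and then you count types by alternating permutations. But no Graham--Rothschild-type theorem gives this: such theorems yield exact monochromatic sets, whereas here full monochromaticity is impossible (Section \ref{Aprox_sec}) and the whole content of the theorem is the precise Ramsey degree $t_{b^k-1}$. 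What you are sketching in stages two and three is, in effect, a reproof of Devlin's theorem (colorings of $[\mathbb{Q}]^\ell$ have Ramsey degree $t_\ell$), which is a deep result resting on Milliken's strong tree theorem and a delicate analysis of embedding types; your ``type = isomorphism class of the subtree spanned by the $b^k-1$ boundaries with branching data'' is not even the right count as stated when $b>2$ (meets can have higher branching, levels can coincide), so one would first have to diagonalize to strongly diagonal configurations before any bijection with alternating permutations could be set up. None of this is carried out, and the asserted stabilization of the color to depend only on the type is exactly the hard step.

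The paper avoids all of this by quoting Devlin's theorem as a black box. The level-$k$ partition of $b^\omega$ induced by $f$ is recorded by the increasing $\ell$-tuple of maxima of its blocks ($\ell=b^k-1$), which lies in $[\mathcal{A}_b]^\ell$ with $\mathcal{A}_b\cong\mathbb{Q}$; Lemma \ref{finding_function} chooses a representative $f_{\mathbf{x}}$ for each tuple $\mathbf{x}$, giving an induced coloring of $[\mathcal{A}_b]^\ell$; Devlin's theorem produces a copy $Y$ of $\mathbb{Q}$ on which at most $t_\ell$ colors occur; and the key structural fact (Lemma \ref{lem_copies_Q_functions}) that copies of $\mathbb{Q}$ inside $Y_h$ correspond exactly to the maps $f\circ h$ converts $Y$ into the desired $h$. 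If you want to salvage your outline, replace your Ramsey-reduction and type-count stages by this reduction to Devlin's theorem (or be prepared to redo Devlin's Milliken-based analysis in the $b$-ary setting); as it stands, the central step of your argument is missing.
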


%\begin{thm}
%  Let $b\in\omega$ with $b\meg2$. Then for every positive real $\ee$ there exist $h\in C^\uparrow_{\text{sur}}(b^\omega)$ and a positive integer $t$ such that for every positive integer $K$ and every coloring $c:C^\uparrow_{\text{sur}}(b^\omega)\to K$ there exists $B\subseteq K$ with at most $t$ elements such that for every $f\in C^\uparrow_{\text{sur}}(b^\omega)$ there exists $g\in C^\uparrow_{\text{sur}}(b^\omega)$ satisfying $\rho_\infty(g,f\circ h)<\ee$ and $c(g)\in B$.
%\end{thm}

To state a corollary of this result we need the following notion whose relationship to Ramsey theory was already pointed out before (see \cite{KPT}). Let $(X,d)$ be a metric space, $t$ a positive integer and $\delta$ a positive real. We will say that a subset $Y$ of $X$ is of $\delta$-covering number at most $t$, if there exists a finite subset $A$ of $X$ of cardinality at most $t$ such that $Y\subseteq \cup_{x\in A}B_d(x,\delta)$. Under this terminology, the above result has the following immediate consequence.

\begin{cor}
  Let $b\in\omega$ with $b\meg2$. Then for every positive real $\ee$ there exists a positive integer $t$ such that for every compact metric space $(K,d)$, every map $c:C^\uparrow_{\text{sur}}(b^\omega)\to K$ and every positive real $\delta$ there exist $h\in C^\uparrow_{\text{sur}}(b^\omega)$ and $B\subseteq K$ of $\delta$-covering number $t$ such that for every $f\in C^\uparrow_{\text{sur}}(b^\omega)$ there exists $g\in C^\uparrow_{\text{sur}}(b^\omega)$ satisfying $\rho_\infty(g,f\circ h)<\ee$ and $c(g)\in B$.
\end{cor}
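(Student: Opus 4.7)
The plan is to deduce the corollary directly from Theorem \ref{main_thm} via a standard compactness reduction that collapses the $K$-valued coloring to a finite one with negligible loss.

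First, given $\varepsilon>0$, I would let $t=t(\varepsilon)$ be the integer produced by Theorem \ref{main_thm}. Fix an arbitrary compact metric space $(K,d)$, a map $c:C^\uparrow_{\text{sur}}(b^\omega)\to K$, and $\delta>0$. By total boundedness of $K$ choose finitely many points $y_1,\dots,y_N\in K$ with $K\subseteq\bigcup_{i=1}^{N}B_d(y_i,\delta)$, and define a finite coloring $c':C^\uparrow_{\text{sur}}(b^\omega)\to\{1,\dots,N\}$ by $c'(f)=\min\{i\mik N:c(f)\in B_d(y_i,\delta)\}$.

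Next, I would feed $c'$ into Theorem \ref{main_thm} with the same $\varepsilon$, obtaining $h\in C^\uparrow_{\text{sur}}(b^\omega)$ and $B'\subseteq\{1,\dots,N\}$ with $|B'|\mik t$ such that every $f\in C^\uparrow_{\text{sur}}(b^\omega)$ admits some $g\in C^\uparrow_{\text{sur}}(b^\omega)$ with $\rho_\infty(g,f\circ h)<\varepsilon$ and $c'(g)\in B'$. Set $A=\{y_i:i\in B'\}$ and $B=\{c(g):g\in C^\uparrow_{\text{sur}}(b^\omega),\ c'(g)\in B'\}$. Then $|A|\mik|B'|\mik t$, and by the very definition of $c'$, any $z\in B$ lies in $B_d(y_i,\delta)$ for some $i\in B'$; hence $B\subseteq\bigcup_{a\in A}B_d(a,\delta)$, so $B$ has $\delta$-covering number at most $t$. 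For each $f$, the $g$ supplied by the theorem satisfies both $\rho_\infty(g,f\circ h)<\varepsilon$ and $c(g)\in B$, which is exactly the conclusion sought.

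There is no substantive obstacle here: the entire argument is routine, with the only inputs being the total boundedness of $K$ and Theorem \ref{main_thm} itself. The reason the corollary is not literally a special case of the theorem is purely bookkeeping, namely repackaging the at most $t$ \emph{color classes} of the induced coloring as a subset of $K$ of small \emph{$\delta$-covering number} rather than of small cardinality.
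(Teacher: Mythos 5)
Your proof is correct, and it is exactly the discretization the paper has in mind: the corollary is stated there as an ``immediate consequence'' of Theorem \ref{main_thm}, and your reduction (cover $K$ by finitely many $\delta$-balls, color by the least index of a ball containing $c(f)$, apply the theorem, and take $B$ inside the union of the at most $t$ selected balls) is the same scheme the paper spells out in its proof of Corollary \ref{cor1}.
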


Moreover Theorem \ref{main_thm} has the following corollary.

\begin{cor}
  \label{cor1}
  Let $b\in\omega$ with $b\meg2$. Then for every positive reals $\ee,M$ there exists a positive integer $t=t(\ee,M)$ such that for every bounded metric space $(K,d)$, every $M$-Lipschitz map $c:C^\uparrow_{\text{sur}}(b^\omega)\to K$ there exist $h\in C^\uparrow_{\text{sur}}(b^\omega)$ and $B\subseteq K$ of $\ee$-covering number at most $t$ such that for every $f\in C^\uparrow_{\text{sur}}(b^\omega)$ we have $c(f\circ h)\in B$. In particular, $t(\ee,M)=t(\frac{\ee}{2M})$.
\end{cor}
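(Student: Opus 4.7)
The plan is to reduce Corollary \ref{cor1} to Theorem \ref{main_thm} by discretizing the $M$-Lipschitz coloring $c$ at scale $\ee/2$ and then absorbing the $\rho_\infty$-approximation error produced by the theorem via the Lipschitz constant. Set $\ee'=\ee/(2M)$ and let $t=t(\ee')$ be the integer provided by Theorem \ref{main_thm}; this is exactly the value claimed in the ``in particular'' clause of the corollary.

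First I would discretize $c$. Since $C^\uparrow_{\text{sur}}(b^\omega)$ is a separable metric space (being a subspace of $C(b^\omega,b^\omega)$ equipped with the sup metric, and the latter is separable because $b^\omega$ is a compact metric space), its image $c(C^\uparrow_{\text{sur}}(b^\omega))$ is separable in $(K,d)$, hence admits a countable $\ee/2$-net $\{y_n\}_{n\in\omega}\subseteq K$. Define $\tilde c:C^\uparrow_{\text{sur}}(b^\omega)\to\omega$ by $\tilde c(f)=\min\{n:d(c(f),y_n)<\ee/2\}$ and apply Theorem \ref{main_thm} with parameter $\ee'$ to $\tilde c$ to obtain $h\in C^\uparrow_{\text{sur}}(b^\omega)$ and a finite $B'\subseteq\omega$ with $|B'|\mik t$, such that for every $f$ there is $g$ with $\rho_\infty(g,f\circ h)<\ee'$ and $\tilde c(g)\in B'$. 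Setting $B=\{c(f\circ h):f\in C^\uparrow_{\text{sur}}(b^\omega)\}$, the triangle inequality combined with the Lipschitz estimate $d(c(g),c(f\circ h))\mik M\cdot\rho_\infty(g,f\circ h)<M\ee'=\ee/2$ and the discretization estimate $d(c(g),y_{\tilde c(g)})<\ee/2$ shows that every point of $B$ lies within $\ee$ of some $y_i$ with $i\in B'$. Hence $B$ has $\ee$-covering number at most $|B'|\mik t$, as required.

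The main obstacle is that Theorem \ref{main_thm} is formally stated only for finite color palettes (``positive integer $K$''), whereas the auxiliary coloring $\tilde c$ produced by the discretization may take countably many values. I would handle this by a standard exhaustion argument: for each $N\in\omega$ collapse the values of $\tilde c$ that are $\meg N$ to a single sentinel symbol, apply Theorem \ref{main_thm} to the resulting $(N{+}1)$-valued coloring to get $(h_N,B'_N)$ with $|B'_N|\mik t$, and then extract a single pair $(h,B')$ in the limit using pigeonhole on the finitely many $t$-element subsets of each initial segment of $\omega$ together with a diagonal argument on the $h_N$. Alternatively, one can simply observe that the proof of Theorem \ref{main_thm} applies verbatim to colorings into arbitrary target sets because the bound $t(\ee')$ depends only on $\ee'$ and $b$, so no part of the argument actually uses finiteness of the palette.
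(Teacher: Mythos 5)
Your reduction has a genuine gap, and it stems from not using the hypothesis that $(K,d)$ is bounded. The paper's proof uses boundedness exactly once, but crucially: it yields a \emph{finite} $\ee/2$-net $x_0,\ldots,x_{N-1}$ of $K$, so the discretized coloring $\widetilde{c}$ takes values in a finite set and Theorem \ref{main_thm} applies as stated; the rest is the same triangle-inequality bookkeeping you carry out. You instead discretize via separability of the image of $c$, which produces a \emph{countable} palette, and then both of your proposed patches for the infinite-palette problem fail. The claim that ``no part of the argument actually uses finiteness of the palette'' is false: the engine of Theorem \ref{main_thm} is Devlin's theorem, which is a pigeonhole-type statement and is simply false for colorings into $\omega$ (color each element of $[\mathbb{Q}]^\ell$ by a distinct color; no copy of $\mathbb{Q}$ uses only $t_\ell$ colors). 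The approximate statement itself also fails for countable palettes: let $k=\lfloor\log_2(1/\ee)\rfloor+1$ and color each $g\in C^\uparrow_{\text{sur}}(b^\omega)$ by the tuple $(\max U^g_s)_{s\in b^{k}}$ via an injection of the countable set of such tuples into $\omega$. By Lemma \ref{being_close} this coloring is constant on each relevant $\ee$-ball, and by Lemma \ref{finding_function_2} the tuples realized by $\{f\circ h: f\in C^\uparrow_{\text{sur}}(b^\omega)\}$ form the infinite set $[Y_h]^{b^k-1}$ for every $h$, so no finite $B$ can work. (This example is in the spirit of Section \ref{Aprox_sec} of the paper.)

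The exhaustion-and-diagonalization patch cannot be repaired either: the maps $h_N$ live in a noncompact space, so there is no limit $h$ to extract; the sentinel class can absorb the colors actually used at each stage, so the sets $B_N'$ need not stabilize in any meaningful sense; and since the countable-palette version of Theorem \ref{main_thm} is false, any limiting argument that would prove it must break somewhere. Your specific $\widetilde{c}$ is not better behaved than a general countable coloring: an $M$-Lipschitz map into a bounded discrete space can essentially be the tuple coloring above, so the only way to make the reduction work is to get a \emph{finite} net of $K$, which is exactly what the boundedness (more precisely, total boundedness, as the paper's proof implicitly uses) hypothesis is for. In short: replace your countable $\ee/2$-net of $c(C^\uparrow_{\text{sur}}(b^\omega))$ by the finite $\ee/2$-net of $K$ coming from the hypothesis, and your argument collapses to the paper's proof; without that step the proposal does not go through.
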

\begin{proof}
  Let $\ee,M$ be positive reals and $t=t(\frac{\ee}{2M})$. Also let $(K,d)$ be a bounded metric space and $c:C^\uparrow_{\text{sur}}(b^\omega)\to K$ be an $M$-Lipschitz map. By the boundness of $(K,d)$ there exist a positive integer $N$ and $x_0,\ldots,x_{N-1}\in K$ such that $\cup_{i=0}^{N-1} B_d(x_i,\ee/2)=K$, where $B_d(x,\delta)=\{y\in X:d(x,y)<\delta\}$. We define $\widetilde{c}:C^\uparrow_{\text{sur}}(b^\omega)\to N$ by setting $\widetilde{c}(f)=\min\{i\in N:c(f)\in B(x_i,\ee/2)\}$ for all $f\in C^\uparrow_{\text{sur}}(b^\omega)$. By the choice of $t$ there exist $h\in C^\uparrow_{\text{sur}}(b^\omega)$ and $F\subset N$ of cardinality at most $t$ such that for every $f\in C^\uparrow_{\text{sur}}(b^\omega)$ there exists $g_f\in C^\uparrow_{\text{sur}}(b^\omega)$ satisfying
  \begin{enumerate}
    \item[(i)] $\rho_\infty(f\circ h,g_f)<\frac{\ee}{2M}$ and
    \item[(ii)] $\widetilde{c}(g_f)\in F$.
  \end{enumerate}
  We set $B'=\bigcup_{i\in F} B_d(x_i,\ee/2)$ and $B=\bigcup_{i\in F} B_d(x_i,\ee)$. Clearly $B$ is of $\ee$-covering number $t$. By (i) and the fact that $c$ is $M$-Lipschitz , for every $f\in C^\uparrow_{\text{sur}}(b^\omega)$, we have that $d(c(g_f),c(f\circ h))<\ee/2$. By (ii) above and the definition of $\widetilde{c}$ we have that $c(g_f)\in B'$, for all $f\in C^\uparrow_{\text{sur}}(b^\omega)$. Thus $c(f\circ h)\in B$, for all $f\in C^\uparrow_{\text{sur}}(b^\omega)$.
\end{proof}

%\begin{cor}
%  Let $b\in\omega$ with $b\meg2$. Then for every positive real $\ee$, every bounded metric space $(K,d)$, every uniformly continuous map $c:C^\uparrow_{\text{sur}}(b^\omega)\to K$ there exist $h\in C^\uparrow_{\text{sur}}(b^\omega)$ and $B\subseteq K$ of finite $\delta$-covering such that for every $f\in C^\uparrow_{\text{sur}}(b^\omega)$ we have $c(f\circ h)\in B$.
%\end{cor}
%\begin{proof}
%  [Sketch of proof.] Let $\ee$ and $c$ as in the statement of the Corollary. Since $c$ is uniformly continuous we may pick $\delta$ such that that for every $f,g\in C^\uparrow_{\text{sur}}(b^\omega)$ with $\rho_\infty(f,g)<\delta$, we have $d(c(f),c(g))<\ee/2$. We apply Theorem \ref{main_thm} for ``$\ee=\delta$" and we follow similar arguments as in the proof of Corollary \ref{cor1}.
%\end{proof}

%\begin{cor}
%  Let $b\in\omega$ with $b\meg2$. Then for every positive real $\ee$ there exist $h\in C^\uparrow_{\text{sur}}(b^\omega)$ and a positive integer $t$ such that for every compact metric space $(K,d)$, every coloring $c:C^\uparrow_{\text{sur}}(b^\omega)\to K$ and every positive real $\delta$ there exists $B\subseteq K$ of $\delta$-covering number $t$ such that for every $f\in C^\uparrow_{\text{sur}}(b^\omega)$ there exists $g\in C^\uparrow_{\text{sur}}(b^\omega)$ satisfying $\rho_\infty(g,f\circ h)<\ee$ and $c(g)\in B$.
%\end{cor}

\section{Clopen interval partitions}\label{filt_sec}
Let us start with the following fact concerning the minimum and maximum elements of nonempty closed or clopen subsets of $b^\omega$ with respect to $\leq_\text{lex}$.
\begin{fact}
  \label{finding_max}
  Let $b<\omega$ with $b\meg 2$. Every nonempty closed subset of $b^\omega$ admits a maximum and a minimum. Moreover if the subset is clopen then its maximum is eventually equal to $b-1$ and its minimum is eventually equal to $0$.
\end{fact}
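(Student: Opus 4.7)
The plan is to construct the minimum of a nonempty closed $F\subseteq b^\omega$ digit by digit in a greedy fashion, and then upgrade the argument in the clopen case by noting that the point constructed must sit inside a basic clopen neighbourhood that pins down its tail.

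Concretely, I would let $F_{-1}=F$ and recursively define $x(n)=\min\{y(n):y\in F_{n-1}\}$ together with $F_n=\{y\in F_{n-1}:y(n)=x(n)\}$. Each $F_n$ is nonempty (by the choice of $x(n)$) and closed (it is the intersection of $F_{n-1}$ with the closed basic cylinder $\{y:y(n)=x(n)\}$). By compactness of $b^\omega$ the decreasing sequence $(F_n)$ has nonempty intersection, and any element of $\bigcap_n F_n$ must coincide with $x$ coordinate-wise; therefore $x\in F$. To check that $x$ is the $\leq_\text{lex}$-minimum of $F$, take any $y\in F$ with $y\neq x$, let $n_0=\min\{n:y(n)\neq x(n)\}$, observe $y\in F_{n_0-1}$ so that $y(n_0)\meg x(n_0)$, and conclude $y(n_0)>x(n_0)$. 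The maximum is produced symmetrically, replacing $\min$ by $\max$ in the construction.

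For the clopen refinement, assume further that $F$ is open. Then the minimum $x$ produced above has a basic clopen neighbourhood $[s]=\{z\in b^\omega:z\!\upharpoonright\!|s|=s\}$ contained in $F$, where $s=x\!\upharpoonright\!|s|$. The point $s\con 0^\omega$ also lies in $[s]\subseteq F$, so by minimality $x\leq_\text{lex} s\con 0^\omega$. But $x$ and $s\con 0^\omega$ agree on the first $|s|$ coordinates, so the comparison forces $x(n)=0$ for every $n\meg|s|$; that is, $x$ is eventually $0$. The same argument with $(b-1)^\omega$ in place of $0^\omega$ shows that the maximum of a clopen set is eventually $b-1$.

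There is no real obstacle here: the main subtlety is simply to remember that $\leq_\text{lex}$ is not the order topology on $b^\omega$, so one cannot invoke ``a continuous function on a compact space attains its extremes''; the greedy coordinate-by-coordinate construction substitutes for this, and compactness enters only through the finite intersection property of the nested closed sets $F_n$.
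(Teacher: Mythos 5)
Your proof is correct, but it follows a different route from the paper. For existence of the extrema the paper fixes, for each $n$, a finite $1/n$-net $G_n\subseteq F$, takes $x_n=\max_{\lex}G_n$, shows $(x_n)_n$ is Cauchy and that its limit is the maximum; you instead build the extremum greedily digit by digit, intersecting $F$ with nested clopen cylinders and using the finite intersection property of the resulting decreasing closed sets. Both use compactness, but your construction is more explicit and combinatorial, and it makes the verification that the constructed point is the $\leq_{\text{lex}}$-extremum a one-line first-disagreement argument; the paper's net argument is the one that would generalize verbatim to other compact metric settings. For the clopen refinement the paper argues by contradiction (if the maximum $x$ is not eventually $b-1$, there are points $>_{\text{lex}}x$ converging to $x$, all outside $F$, contradicting openness), whereas you argue directly that a cylinder $[s]\subseteq F$ around the extremum forces it to equal $s^\con 0^\omega$ (resp.\ $s^\con(b-1)^\omega$); your version even identifies the extremum exactly and avoids the small unproved step in the paper's argument that such approximating points exist. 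One quibble: your closing remark that $\leq_{\text{lex}}$ does not induce the (product) topology of $b^\omega$ is actually false --- the order topology of $\leq_{\text{lex}}$ is coarser than the product topology and both are compact Hausdorff, so they coincide, and one could alternatively argue that a compact subset of a linearly ordered topological space attains its supremum and infimum. This inaccuracy is only in a side comment and does not affect the validity of your proof.
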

\begin{proof}
  Let $F$ be a nonempty closed subset of $b^\omega$. Clearly $F$ is a compact set. For every $n<\omega$, we pick a finite subset $G_n$ of $F$ such that $F\subseteq\cup_{x\in G_n}B_{\rho_b}(x,1/n)$ and we set $x_n$ to be the maximum of $G_n$ with respect to $\leq_\text{lex}$. It is easy to check that $x_m\in B_{\rho_b}(x_n,1/n)$, for all $n\mik m<\omega$. Hence $(x_n)_n$ is a Cauchy sequence and therefore converges to some element $x$ of $F$. Moreover we have that
  \begin{equation}
  \label{e01}
    \rho_b(x,x_n)\mik1/n
  \end{equation} for all $n<\omega$. We claim that $x$ is the maximum of $F$. Indeed, assume on the contrary that there exists $y\in F$ such that $x<_\text{lex}y$.
  Let $n_0$ positive integer such that $1/n_0<\rho_b(x,y)/2$. Thus by (\ref{e01}) we have that $y\not\in B_{\rho_b}(x_{n_0},1/n_0)$ and since $x<_\text{lex} y$ we have that $z<_\text{lex}y$ for all $z\in B_{\rho_b}(x_{n_0},1/n_0)$. This in particular, by the choice of $x_{n_0}$, yields that $y\not\in\cup_{x\in G_{n_0}}B_{\rho_b}(x,1/n_0)$, which contradicts that the latter union covers $F$. Similar arguments yield that $F$ admits a minimum.

  Let us now assume that $F$ is a nonempty clopen subset of $b^\omega$. Assume that the maximum $x$ of $F$ is not eventually equal to $b-1$. Then we can pick a sequence $(x_n)_n$ in $b^\omega$ convergent to $x$ such that $x<_\text{lex}x_n$ for all $n<\omega$. Thus, since $x$ is the maximum of $F$ we have that $x_n$ belongs to the complement of $F$ for all $n<\omega$, which contradicts that $F$ is also open. Similar arguments yield that the minimum of $F$ is eventually equal to 0.
\end{proof}

Let $b<\omega$ with $b\meg2$. A subset $U$ of $b^\omega$ is called \emph{interval} if for every $x,y\in U$ and $z\in b^\omega$ satisfying $x\leq_\text{lex}z\leq_\text{lex}y$ we have that $z\in U$. Central role in our analysis possesses the following notion.
\begin{defn}\label{filtering_defn}
  Let $b<\omega$ with $b\meg2$. A family $(U_s)_{s\in b^{<\omega}}$ of nonempty clopen intervals of $b^\omega$ is called a \emph{filtering} on $b^\omega$, if the following are satisfied
  \begin{enumerate}
     \item[(i)] $U_\emptyset=b^\omega$,
     \item[(ii)] $(U_{s^\con(i)})_{i<b}$ is a (disjoint) partition of $U_s$ for every $s\in b^{<\omega}$ and
     \item[(iii)] $(\max U_{s^\con(i)})_{i<b}$ is $\leq_\text{lex}$-increasing for every $s\in b^{<\omega}$.
   \end{enumerate}
\end{defn}
 Let $\mathcal{F}_b$ be the set of all filterings on $b^\omega$. For $(V_s)_{s\in b^{<\omega}}$ and $(U_s)_{s\in b^{<\omega}}$ in $\mathcal{F}_b$ we will write $(V_s)_{s\in b^{<\omega}}\preccurlyeq(U_s)_{s\in b^{<\omega}}$ if for every $s\in b^{<\omega}$, the set $V_s$ is element of the algebra generated by the members of the family $(U_s)_{s\in b^{<\omega}}$. For $(U_s)_{s\in b^{<\omega}}$ in $\mathcal{F}_b$ we set
\[\mathcal{F}_b((U_s)_{s\in b^{<\omega}})=\{(V_s)_{s\in b^{<\omega}}\in\mathcal{F}_b:(V_s)_{s\in b^{<\omega}}\preccurlyeq(U_s)_{s\in b^{<\omega}}\}.\]
Moreover, for every $f\in C^\uparrow_{\text{sur}}(b^\omega)$, we set $(U^f_s)_{s\in b^{<\omega}}=(f^{-1}(W_s))_{s\in b^{<\omega}}$, where $W_s=\{x\in b^\omega:s\sqsubset x\}$ for all $s\in b^{<\omega}$. It is easy to check that $(U^f_s)_{s\in b^{<\omega}}$ is a filtering on $b^\omega$. The following lemma describes the relation between the elements of $C^\uparrow_{\text{sur}}(b^\omega)$ and the filterings on $b^\omega$. Finally, for $y\in b^\omega$ and $n<\omega$ by $x|n$ we denote the initial segment of $x$ of length $n$.

\begin{lem}
  \label{lem_indentification}
  Let $b<\omega$ with $b\meg2$. Then the map $D:C^\uparrow_{\text{sur}}(b^\omega)\to \mathcal{F}_b$ sending each $f$ to $(U^f_s)_{s\in b^{<\omega}}$ is 1-1 and onto. Moreover, for every $h\in C^\uparrow_{\text{sur}}(b^\omega)$ we have $\mathcal{F}_b((U^h_s)_{s\in b^{<\omega}})=\{(U^{f\circ h}_s)_{s\in b^{<\omega}}:f\in C^\uparrow_{\text{sur}}(b^\omega)\}$.
\end{lem}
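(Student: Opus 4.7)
The plan is to treat the three assertions in turn: injectivity of $D$, surjectivity of $D$, and then each inclusion in the factorization equality. The key idea throughout is that, once the family $(U^f_s)_{s\in b^{<\omega}}$ is known, $f(x)|n$ is recoverable as the unique $s\in b^n$ with $x\in U^f_s$, since condition (ii) of Definition \ref{filtering_defn} forces the $(U^f_{s^\con(i)})_{i<b}$ to partition $U^f_s$.

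For injectivity, if $D(f)=D(g)$ then for each $x$ and each $n$ the unique $s\in b^n$ with $x\in U^f_s=U^g_s$ is simultaneously $f(x)|n$ and $g(x)|n$, so $f=g$. For surjectivity, starting from a filtering $(U_s)_{s\in b^{<\omega}}$, define $f(x)$ to be the element of $b^\omega$ whose initial segment of length $n$ is the unique $s\in b^n$ with $x\in U_s$; condition (ii) makes this well-defined and compatible across $n$, and by construction $f^{-1}(W_s)=U_s$, giving continuity and $D(f)=(U_s)$. Ontoness of $f$ follows because for each $y\in b^\omega$ the nested sequence of nonempty clopens $(U_{y|n})_n$ has nonempty intersection by compactness of $b^\omega$. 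For monotonicity, I use that two disjoint intervals in the linear order $(b^\omega,\leq_\text{lex})$ are automatically comparable setwise; combined with condition (iii) this forces $U_{s^\con(i)}<_\text{lex} U_{s^\con(j)}$ setwise iff $i<j$. Hence if $x_1\leq_\text{lex} x_2$ and $f(x_1)\neq f(x_2)$, then at the first level of disagreement we have $x_1\in U_{s^\con(i)}$ and $x_2\in U_{s^\con(j)}$ for a common prefix $s$ and $i\neq j$, and the setwise ordering forces $i<j$, whence $f(x_1)<_\text{lex} f(x_2)$.

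For the $\supseteq$ inclusion of the second claim, the identity $U^{f\circ h}_s=h^{-1}(U^f_s)$ combined with the fact that $U^f_s$ is clopen and hence, by compactness, a finite union of basic clopens $W_t$, gives $U^{f\circ h}_s=\bigcup_t U^h_t$, which visibly lies in the algebra generated by $(U^h_s)_{s\in b^{<\omega}}$.

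For the $\subseteq$ inclusion, given $(V_s)\preccurlyeq(U^h_s)$, each $V_s$ is a Boolean combination of the sets $h^{-1}(W_t)=U^h_t$, so $V_s=h^{-1}(A_s)$ for a unique clopen $A_s\subseteq b^\omega$, uniqueness using that $h$ is onto. The strategy is to show that $(A_s)$ is itself a filtering, for then $f:=D^{-1}((A_s))$ satisfies $U^{f\circ h}_s=h^{-1}(U^f_s)=h^{-1}(A_s)=V_s$, as required. Conditions (i) and (ii) as well as nonemptiness of each $A_s$ push forward directly from the corresponding properties of $(V_s)$ using that $h$ is onto. The main obstacle is verifying that each $A_s$ is an interval and that condition (iii) holds: for the interval property, given $y_1,y_2\in A_s$ with $y_1<_\text{lex} z<_\text{lex} y_2$, one picks $x_1,x_2\in V_s$ with $h(x_i)=y_i$ and any $x\in h^{-1}(z)$; the nondecreasing character of $h$ forces $x_1<_\text{lex} x<_\text{lex} x_2$, and since $V_s$ is an interval this places $x\in V_s$, so $z=h(x)\in A_s$. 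For (iii), $h$ nondecreasing combined with $h(V_{s^\con(i)})=A_{s^\con(i)}$ gives $\max A_{s^\con(i)}=h(\max V_{s^\con(i)})$, which transports the nondecrease from $(V_s)$ to $(A_s)$; strictness then follows from the disjointness of the $(A_{s^\con(i)})_{i<b}$ already established.
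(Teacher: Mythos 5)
Your proof is correct. The first half (injectivity and surjectivity of $D$, and the easy inclusion $\supseteq$ via writing each clopen $U^f_s$ as a finite union of basic sets $W_t$) is essentially the paper's argument, though your continuity check via $f^{-1}(W_s)=U_s$ is a bit cleaner than the paper's subsequence argument. Where you genuinely diverge is the hard inclusion $\subseteq$ of the second claim: the paper first picks $g$ with $D(g)=(V_s)_{s\in b^{<\omega}}$ and then constructs the factor map $f$ pointwise, sending $y\in b^\omega$ to the branch determined by the sets $U^g_{s_n}$ containing the fiber $h^{-1}(\{y\})=\bigcap_n U^h_{y|n}$, and verifies $f\in C^\uparrow_{\text{sur}}(b^\omega)$ by ``arguing as in the first part''; you instead push the filtering forward along $h$, writing $V_s=h^{-1}(A_s)$ for a unique clopen $A_s$ (uniqueness from surjectivity of $h$), verify that $(A_s)_{s\in b^{<\omega}}$ is itself a filtering (the interval property and condition (iii) transported via the nondecreasing surjection $h$, with strictness of the maxima coming from disjointness), and then simply reuse the surjectivity of $D$ to get $f$ with $U^f_s=A_s$, so that $U^{f\circ h}_s=h^{-1}(A_s)=V_s$. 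Your route buys a more self-contained argument -- all the analytic verifications (continuity, monotonicity, surjectivity) are done once, in the proof that $D$ is onto, and the second claim becomes purely Boolean/order-theoretic -- at the cost of checking the filtering axioms for the pushforward family; the paper's route is more explicit about producing the factorization $g=f\circ h$, which in your version follows from the equality of filterings together with injectivity of $D$. Both are complete proofs of the stated lemma.
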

\begin{proof}
  To prove that $D$ is 1-1, we fix $f\neq g$ in $C^\uparrow_{\text{sur}}(b^\omega)$. Then there exists $x\in b^\omega$ such that $f(x)\neq g(x)$. Pick $s\in b^{<\omega}$ such that $s$ is initial segment of $f(x)$ but not of $g(x)$. Thus $f(x)\in W_s$ and $g(x)\not\in W_s$. That is $x\in f^{-1}(W_s)=U^f_s$ and $x\not\in g^{-1}(W_s)=U^g_s$. Hence $U^f_s\neq U^g_s$ and therefore $D(f)\neq D(g)$.

  In order to prove that $D$ is onto, let us fix some $(U_s)_{s\in b^{<\omega}}$ in $\mathcal{F}_f$ and define $f:b^\omega\to b^\omega$ as follows. Fix an $x\in b^\omega$. Since $(U_s)_{s\in b^{<\omega}}$ is a filtering, by (i) and (ii) of definition \ref{filtering_defn} there exists a sequence $(s_n)_{n<\omega}$ in $b^{<\omega}$ such that $x\in U_{s_n}$, $s_n$ is of length $n$ and $s_n\sqsubset s_{n+1}$ for all $n<\omega$. Actually there exists unique such a sequence. Let $y$ be the unique element of $b^\omega$ satisfying $s_n\sqsubset y$ for all $n<\omega$. Set $f(x)=y$. Let us check that $f$ belongs to $C^\uparrow_{\text{sur}}(b^\omega)$ satisfying $(U^f_s)_{s\in b^{<\omega}}=(U_s)_{s\in b^{<\omega}}$. By (iii) of Definition \ref{filtering_defn} it follows that $f$ is increasing. To check that $f$ is onto let us fix some $y\in b^\omega$. Observe that $\cap_{n<\omega}U_{y|n}$ is non-empty. Picking any $x$ from this intersection we have that $f(x)=y$. To justify the continuity of $f$ let us fix a convergent sequence $(x_n)_{n<\omega}$ to so some $x$ in $b^\omega$. Let $y=f(x)$ and $y_n=f(x_n)$ for all $n<\omega$. Moreover we set $s_n$ to be the initial segment of $y$ of length $n$ for all $n<\omega$. By the definition of $f$ we have that $x\in U_{s_n}$. We pass to a subsequence $(x_{k_n})_{n<\omega}$ of $(x_n)_{n<\omega}$ such that $x_{k_n}\in U_{s_n}$ for every $n<\omega$. By the definition of $f$ we have that $s_n$ is initial segment of $y_{k_n}$ and by the definition of $(s_n)_{n<\omega}$ we get that $y_{k_n}$ converges to $y$. Hence $f$ is continuous. Up to now we have proven that $f$ belongs to $C^\uparrow_{\text{sur}}(b^\omega)$. To see that $(U^f_s)_{s\in b^{<\omega}}=(U_s)_{s\in b^{<\omega}}$ let us fix an arbitrary $s\in b^{<\omega}$. Then
  \[\begin{split}&x\in U_s \\
    \text{iff }&s\text{ is initial segment of }f(x)\\
    \text{iff }&f(x)\in W_s \\
    \text{iff }&x\in U^f_x.\end{split}\]

  To prove the second part of the lemma we fix some $h\in C^\uparrow_{\text{sur}}(b^\omega)$. Since every clopen set can be written as a finite union of the basic clopen sets $(W_s)_{s\in b^{<\omega}}$, we easily get that $\mathcal{F}_b((U^h_s)_{s\in b^{<\omega}})\supseteq\{(U^{f\circ h}_s)_{s\in b^{<\omega}}:f\in C^\uparrow_{\text{sur}}(b^\omega)\}$. In order to prove the inverse inclusion, let us pick $(V_s)_{s\in b^{<\omega}}$ from $\mathcal{F}_b((U^h_s)_{s\in b^{<\omega}})$. Since $D$ is onto, there exists $g\in C^\uparrow_{\text{sur}}(b^\omega)$ such that $(U^g_s)_{s\in b^{<\omega}}=(V_s)_{s\in b^{<\omega}}$. It suffices to find $f\in C^\uparrow_{\text{sur}}(b^\omega)$ such that $g=f\circ h$. We define $f$ as follows. Let $y\in b^\omega$ and $A_y=\cap_{n<\omega}U^h_{y|n}$. Since $(U^g_s)_{s\in b^{<\omega}}\in\mathcal{F}_b((U^h_s)_{s\in b^{<\omega}})$, we have that there exists a sequence $(s_n)_{n<\omega}$ such that $A_y\subseteq U^g_{s_n}$, $s_n$ is initial segment of $s_{n+1}$ and $s_n$ is of length $n$ for all $n<\omega$. Finally, let $z$ be the unique element in $b^\omega$ such that $s_n$ is initial segment of $z$ for all $n<\omega$ and set $f(y)=z$. Arguing as in the proof of the first part of the lemma, one can show that $f$ belongs to $C^\uparrow_{\text{sur}}(b^\omega)$.  To prove that $f\circ h=g$ it suffices to show that $(U_s^{f\circ h})_{s\in b^{<\omega}}=(U_s^g)_{s\in b^{<\omega}}$. By the definition of the map $g$, it suffices to show that $(U_s^{f\circ h})_{s\in b^{<\omega}}=(V_s)_{s\in b^{<\omega}}$. First, let us observe that since $(V_s)_{s\in b^{<\omega}}\preccurlyeq(U_s^{h})_{s\in b^{<\omega}}$, we have that $x\in V_s$ if and only if $h^{-1}(\{h(x)\})\subseteq V_s$, for all $x\in b^\omega$ and $s \in b^{<\omega}$. Moreover, for every $x\in b^\omega$ we have that $A_{h(x)}=\cap_{n<\omega}U^h_{h(x)|n}=h^{-1}(\cap_{n<\omega} W_{h(x)|n})=h^{-1}(\{h(x)\})$. Hence for every $x\in b^\omega$ and $s \in b^{<\omega}$ we have
  \[\begin{split}
    & x\in V_s\\
    \text{iff }& h^{-1}(\{h(x)\})\subseteq V_s \\
    \text{iff }&A_{h(x)}\subseteq V_s\\
    \text{iff }&s\text{ is initial segment of } f(h(x)) \\
    \text{iff }&f(h(x)) \in W_s \\
    \text{iff }&x\in U_s^{f\circ h}.
  \end{split}\]
\end{proof}

\section{Order isomorphic copies of $\mathbb{Q}$ in $b^\omega$}\label{Devl_sec}
Let $b<\omega$ with $b\meg2$. We set $\mathcal{A}_b$ to be the set of the elements of $b^\omega$ being eventually equal to $b-1$ excluding $\max b^\omega$. It is easy to check that $(\mathcal{A}_b,\leq_\text{lex})$ is a countable unbounded dense linearly ordered set and therefore order isomorphic to $\mathbb{Q}$. Moreover, for every filtering $(U_s)_{s\in b^{<\omega}}$ on $b^\omega$, the subset $$\{\max U_s:s\in b^{<\omega}\}\setminus\{\max b^\omega\}$$ of $\mathcal{A}_b$ (see Fact \ref{finding_max}) is order isomorphic to $\mathbb{Q}$.

We shall need a result due to D. Devlin (see \cite{D} and \cite{To}). In order to state it we need some additional notation. For a linear ordered set $(P,\leq)$ and a positive integer $k$ by $[P]^k$ we denote the set of all $\leq$-increasing $k$-tuples in $P$. Moreover, let us recall the sequence of the odd tangent numbers $(t_k)_{k=1}^\infty$ defined by $t_k=\tan^{2k-1}(0)$ for every positive integer $k$.
\begin{thm}
  [D. Devlin]
  \label{Devlin_Thm}
  For every positive integer $l$ and every finite coloring of $[\mathbb{Q}]^l$, there exists a subset $Y$ of $\mathbb{Q}$ order isomorphic to $\mathbb{Q}$ such that $[Y]^l$ uses at most $t_l$ colors.
\end{thm}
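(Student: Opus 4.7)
The plan is to transfer the coloring problem from $\mathbb{Q}$ to the binary tree $2^{<\omega}$, invoke Milliken's theorem on strong subtrees, and then control the residual colors by counting the possible ``tree types'' of $l$-element configurations.

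First I would identify $\mathbb{Q}$ with a $\leq_{\mathrm{lex}}$-dense subset of $2^{<\omega}$, for example the set $\mathcal{A}_2\setminus\{\max 2^\omega\}$ introduced at the start of this section (or, equivalently, the set of nodes of $2^{<\omega}$ ending in $1$, lexicographically ordered). Given an $l$-tuple $q_1<_{\mathrm{lex}}\cdots<_{\mathrm{lex}}q_l$ in this copy of $\mathbb{Q}$, the finite subtree of $2^{<\omega}$ generated by the $q_i$ together with all their pairwise meets, decorated by which nodes are meets and which are the chosen points (together with the left/right passing information at each branch), defines what I call the \emph{strong type} of the tuple. Two tuples of the same strong type are indistinguishable by any coloring that factors through their strongly embedded configuration in $2^{<\omega}$.

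Second, I would count the number of strong types on $l$ points. This is the combinatorial heart of the argument: via the standard bijection between such decorated binary trees and alternating permutations of $2l-1$ elements, together with André's classical enumeration of alternating permutations, the number of strong types is precisely $t_l=\tan^{(2l-1)}(0)$. With the types classified, I would then invoke Milliken's theorem on strong subtrees of $2^{<\omega}$: for every finite coloring of the strongly embedded finite subtrees of a fixed isomorphism type, there exists an infinite strong subtree of $2^{<\omega}$ on which the coloring is constant.

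Third, given a finite coloring $c:[\mathbb{Q}]^l\to k$, I would partition $[\mathbb{Q}]^l$ into the $t_l$ strong-type classes and apply Milliken's theorem once per class, each application producing a further infinite strong subtree. After the $t_l$ successive applications I obtain an infinite strong subtree $T_*\subseteq 2^{<\omega}$ such that, within each strong type, $c$ is constant. The lex-ordered skeleton $Y$ of $T_*$ is dense and unbounded, hence order-isomorphic to $\mathbb{Q}$, and every $l$-element subset of $Y$ realizes exactly one of the $t_l$ strong types; consequently $[Y]^l$ uses at most $t_l$ colors.

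The main obstacle is the combinatorial step: verifying that the set of strong types on $l$ points is enumerated exactly by the odd tangent number $t_l$ (and not by some larger quantity) requires the bijection with alternating permutations and a careful bookkeeping of meets and passing data. Once the classification is in place, the appeals to Milliken's theorem are routine and the passage from a strong subtree of $2^{<\omega}$ back to a copy of $\mathbb{Q}$ is immediate from the density of its skeleton.
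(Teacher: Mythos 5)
The paper does not actually prove this statement: it is quoted from Devlin's thesis \cite{D} and from \cite{To}, so there is no internal argument to compare with. Your outline follows the standard route of those sources (embed $\mathbb{Q}$ into $2^{<\omega}$, classify $l$-tuples by their meet-closure configuration, homogenize with Milliken's strong subtree theorem), but it has a genuine gap exactly at the step you call the combinatorial heart, and it is not mere bookkeeping. If a ``strong type'' records the meet tree \emph{together with left/right passing information at each branch}, then the number of types on $l$ points is strictly larger than $t_l$: already for $l=2$, the relative order of the lengths of the two nodes gives $2$ possibilities and the passing digit of the longer node at the level of the shorter one gives $2$ more, so $4>2=t_2$. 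The bijection with alternating permutations of $2l-1$ letters enumerates only the level-ordered meet trees ($l$ leaves and $l-1$ meets with all $2l-1$ levels distinct), with no free passing data; so either your types are the unnormalized ones and the count $t_l$ is wrong, or they are the level-ordered trees and then a type does not determine the configuration inside the tree, which breaks the Milliken step (the canonical tuple read off a strong subtree has specific passing digits, so tuples with other passing digits are not captured by that homogenization).

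Correspondingly, the final step fails as stated: the ``lex-ordered skeleton'' of the Milliken tree $T_*$ need not have the property that every $l$-element subset realizes one of only $t_l$ configurations, since nothing controls comparabilities, coincidences of levels, or passing digits among its elements. What the actual proof does is extract inside $T_*$ a dense-in-itself \emph{strongly diagonal} antichain $Y$ with normalized passing numbers (all nodes of the meet closure at distinct levels, and at the level of any node of the closure every other node passing with a prescribed digit, say $0$, apart from the forced $1$ immediately above a meet on its right branch), and then prove an envelope lemma: every $l$-subset of $Y$ sits in canonical position inside a strong subtree of $T_*$ of the appropriate height, so the Milliken homogenization for its normalized type really does fix its color. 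The normalized types are exactly the level-ordered meet trees, and only these are counted by $t_l$. With the normalization and the envelope lemma inserted, the remainder of your plan (one application of Milliken's theorem per type, $t_l$ types, density of $Y$ giving a copy of $\mathbb{Q}$) is the standard argument; also note that the related claim that tuples of the same type are ``indistinguishable'' is not an assumption one can make about an arbitrary coloring, but precisely the conclusion obtained after the Milliken applications on the normalized configurations.
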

The above has the following immediate consequence.
\begin{cor}
  \label{Devlin_Cor}
  Let $b<\omega$ with $b\meg2$. For every positive integer $k$ and every finite coloring of $[\mathcal{A}_b]^k$, there exists a subset $Y$ of $\mathcal{A}_b$ order isomorphic to $\mathcal{A}_b$ such that $[Y]^k$ uses at most $t_k$ colors.
\end{cor}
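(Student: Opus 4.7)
The plan is to transport Devlin's theorem along the order isomorphism between $(\mathcal{A}_b,\leq_\mathrm{lex})$ and $(\mathbb{Q},\leq)$, which was already observed at the beginning of Section \ref{Devl_sec}. Concretely, fix a positive integer $k$ and a finite coloring $c:[\mathcal{A}_b]^k\to K$, where $K$ is a finite set. Since $(\mathcal{A}_b,\leq_\mathrm{lex})$ is a countable unbounded dense linearly ordered set without endpoints, it is order isomorphic to $(\mathbb{Q},\leq)$; fix such an order isomorphism $\varphi:\mathcal{A}_b\to\mathbb{Q}$.

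The order isomorphism $\varphi$ induces, for each positive integer $k$, a bijection $\Phi:[\mathcal{A}_b]^k\to[\mathbb{Q}]^k$ sending $(x_1<_\mathrm{lex}\cdots<_\mathrm{lex}x_k)$ to $(\varphi(x_1)<\cdots<\varphi(x_k))$. Define the induced coloring $c':[\mathbb{Q}]^k\to K$ by $c'=c\circ\Phi^{-1}$. By Theorem \ref{Devlin_Thm} applied to $c'$, there exists a subset $Y'$ of $\mathbb{Q}$ order isomorphic to $\mathbb{Q}$ such that $[Y']^k$ uses at most $t_k$ colors under $c'$.

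Now set $Y=\varphi^{-1}(Y')\subseteq\mathcal{A}_b$. Since $\varphi$ is an order isomorphism, $(Y,\leq_\mathrm{lex})$ is order isomorphic to $(Y',\leq)$, which is order isomorphic to $\mathbb{Q}$, hence to $\mathcal{A}_b$. Moreover, $\Phi$ restricts to a bijection between $[Y]^k$ and $[Y']^k$, and by construction $c|_{[Y]^k}=c'\circ\Phi|_{[Y]^k}$, so $[Y]^k$ uses at most $t_k$ colors, as required. There is no real obstacle: the entire content of the corollary is the order-isomorphism $\mathcal{A}_b\cong\mathbb{Q}$ together with the fact that Devlin's partition theorem is stated in terms of the order type alone.
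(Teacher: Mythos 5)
Your proposal is correct and is exactly the argument the paper has in mind: the corollary is stated as an immediate consequence of Theorem \ref{Devlin_Thm}, obtained by transporting the coloring along the order isomorphism $\mathcal{A}_b\cong\mathbb{Q}$ noted at the start of Section \ref{Devl_sec}. Your write-up just makes this transport explicit, so there is nothing to add.
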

For every positive integer $k$, let $l_k=b^k-1$ and $(s_i^k)_{i=0}^{l_k}$ be the $\leq_\text{lex}$-increasing enumeration of $b^k$. Clearly for every positive integer $k$ and every $f\in C^\uparrow_{\text{sur}}(b^\omega)$ we have that $(\max U^f_{s^k_i})_{i<l_k}\in[\mathcal{A}_b]^{l_k}$.

\begin{lem}
  \label{finding_function}
  Let $b<\omega$ with $b\meg2$ and $k$ be a positive integer. For every $\mathbf{x}\in[\mathcal{A}_b]^{l_k}$ we have that there exists $f\in C^\uparrow_{\text{sur}}(b^\omega)$ such that $\mathbf{x}=(\max U^f_{s^k_i})_{i<l_k}$.
\end{lem}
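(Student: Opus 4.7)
The plan is to construct the desired $f$ by building the corresponding filtering $(U_s)_{s\in b^{<\omega}}$ and then invoking Lemma \ref{lem_indentification}. It suffices to produce a filtering whose level-$k$ cells $(U_{s^k_i})_{0\mik i\mik l_k}$ form a $\leq_\text{lex}$-increasing clopen-interval partition of $b^\omega$ with $\max U_{s^k_i}=x_i$ for every $i<l_k$; by Fact \ref{finding_max} the last cell $U_{s^k_{l_k}}$ will automatically have maximum $\max b^\omega$.

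The level-$k$ cells are dictated by the data: set $U_{s^k_0}=\{y\in b^\omega:y\leq_\text{lex} x_0\}$, $U_{s^k_i}=\{y\in b^\omega:x_{i-1}<_\text{lex} y\leq_\text{lex} x_i\}$ for $0<i<l_k$, and $U_{s^k_{l_k}}=\{y\in b^\omega:x_{l_k-1}<_\text{lex} y\}$. Because each $x_i\in\mathcal{A}_b$ is eventually equal to $b-1$, a direct computation writes $\{y:y\leq_\text{lex} x_i\}$ as a finite union of basic clopens $W_t$, so each $U_{s^k_i}$ is clopen; together they form a $\leq_\text{lex}$-increasing clopen-interval partition of $b^\omega$ with the prescribed maxima.

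To complete the filtering, for $|s|<k$ I set $U_s=\bigcup\{U_t:s\sqsubseteq t,\ |t|=k\}$; conditions (i)--(iii) of Definition \ref{filtering_defn} at these coarser levels follow at once from the lex-ordering of the level-$k$ partition. For $|s|\meg k$, I refine recursively: given a clopen interval $U_s$, by Fact \ref{finding_max} and the density of $\mathcal{A}_b$ in $b^\omega$ I may choose $z_1<_\text{lex}\cdots<_\text{lex} z_{b-1}$ in $\mathcal{A}_b\cap U_s$ strictly between $\min U_s$ and $\max U_s$, and then set $U_{s\con(0)}=\{y\in U_s:y\leq_\text{lex} z_1\}$, $U_{s\con(i)}=\{y\in U_s:z_i<_\text{lex} y\leq_\text{lex} z_{i+1}\}$ for $0<i<b-1$, and $U_{s\con(b-1)}=\{y\in U_s:y>_\text{lex} z_{b-1}\}$. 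Each of these is clopen, since $z_i\in\mathcal{A}_b$ makes the sets $\{y:y\leq_\text{lex} z_i\}$ and $\{y:y>_\text{lex} z_i\}$ clopen, so the invariant that every $U_s$ is a clopen interval is preserved. Once the filtering is built, Lemma \ref{lem_indentification} produces $f\in C^\uparrow_{\text{sur}}(b^\omega)$ with $(U^f_s)_{s\in b^{<\omega}}=(U_s)_{s\in b^{<\omega}}$, so $\max U^f_{s^k_i}=x_i$ for every $i<l_k$, as required. The main point that requires care throughout is the clopenness verification at each step; everything else is routine bookkeeping driven by the filtering definition and the lex order.
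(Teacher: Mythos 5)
Your proof is correct and follows essentially the same route as the paper: one builds the level-$k$ clopen-interval partition with the prescribed maxima, extends it (arbitrarily) to a filtering, and invokes Lemma \ref{lem_indentification} to obtain $f$. The only difference is that you spell out the extension step (coarsening below level $k$ and refining above it via cut points in $\mathcal{A}_b$, which is what guarantees clopenness), which the paper compresses into ``pick any filtering agreeing with the level-$k$ cells.''
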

\begin{proof}
  Let $\mathbf{x}=(x_i)_{i<l_k}\in[\mathcal{A}_b]^{l_k}$ and $(U_s)_{s\in b^k}$ be the unique partition of $b^\omega$ into nonempty clopen intervals satisfying $\max U_{s^k_i}=x_i$ for all $i<l_k$ and $\max U_{s^k_{l_k}}=\max b^\omega$. Pick any filtering $(V_s)_{s\in b^{<\omega}}$ such that $V_{s_i^{l_k}}=U_{s_i^{l_k}}$ for all $i\mik l_k$. Finally, by Lemma \ref{lem_indentification}, there exists $f\in C^\uparrow_{\text{sur}}(b^\omega)$ such that $(U^f_s)_{s\in b^{<\omega}}=(V_s)_{s\in b^{<\omega}}$. Clearly $f$ is the desirable one and the proof is complete.
%   Let $\mathbf{x}=(x_i)_{i<l_k}\in[\mathcal{A}_b]^{l_k}$ and $(U_s)_{s\in b^k}$ be the unique partition of $b^\omega$ into nonempty clopen intervals satisfying $\max U_{s^k_i}=x_i$ for all $i<l_k$ and $\max U_{s^k_{l_k}}=\max b^\omega$. We define $f:b^\omega\to b^\omega$ as follows. For every $i\mik l_k$ we define the map $f$ on $U_{s^k_i}$ as follows. Let $F$ be the set of all $\sqsubseteq$-minimal element of the set $\{s\in b^{<\omega}:W_s\subseteq U_{s_i^k}\}$ and $v$ the $\leq_\text{lex}$-minimal element of $F$. It is easy to check that such a minimum exists by observing that $F$ consists of pairwise $\sqsubseteq$-incomparable elements and, since $U_{s_i^k}$ is clopen, the set $F$ is finite.  For every $x\in U_{s^k_i}\setminus W_v$ we set $f(x)=\max U_{s^k_i}$, while for every $x\in W_v$ we set $f(x)=s_i^{k\;\con} y$, where $y$ is the unique element of $b^\omega$ such that $x=v^\con y$. One can easily check that $f$ is the desired one.
\end{proof}
%For every positive integer $l$, by $[\mathcal{A}_b]^l$ we denote the set of all $\leq_\text{lex}$-increasing $l$-tuples in $\mathcal{A}_b$.
We will also need some notation concerning the order isomorphic copies of $\mathbb{Q}$ in $\mathcal{A}_b$. Let $b<\omega$ with $b\meg2$. Also let $Y$ be a subset of $\mathcal{A}_b$ order isomorphic to $\mathbb{Q}$. We set
\[[Y]^\eta=\{Z\subseteq Y: Z \;\text{is order isomorphic to}\;\mathbb{Q}\}.\]
Moreover, for every $f\in C^\uparrow_{\text{sur}}(b^\omega)$ let us set
  \[Y_f=\{\max U^f_s:s\in b^{<\omega}\}\setminus\{\min b^\omega\}.\]
It is easy to check that $Y_f\in [\mathcal{A}_b]^\eta$, for all $f\in C^\uparrow_{\text{sur}}(b^\omega)$. The relation between the elements of $C^\uparrow_{\text{sur}}(b^\omega)$ and $\mathcal{A}_b$ is even stronger and it is described by the following lemma.

\begin{lem}
  \label{lem_copies_Q_functions}
  Let $b<\omega$ with $b\meg2$. Then for every $Y\in[\mathcal{A}_b]^\eta$ there exists $f \in C^\uparrow_{\text{sur}}(b^\omega)$ such that $Y_f=Y$. More generally, for every $h\in C^\uparrow_{\text{sur}}(b^\omega)$ we have that $[Y_h]^\eta=\{Y_{f\circ h}:f\in C^\uparrow_{\text{sur}}(b^\omega)\}$.
\end{lem}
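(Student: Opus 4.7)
The plan is to prove the first claim by constructing, for a given $Y\in[\mathcal{A}_b]^\eta$, an explicit filtering whose set of maxima apart from $\max b^\omega$ is exactly $Y$, and then pulling it back to an $f\in C^\uparrow_{\mathrm{sur}}(b^\omega)$ via Lemma~\ref{lem_indentification}. The structural key is the set
\[
I=\{s\in b^{<\omega}\setminus\{\emptyset\}:s(|s|-1)\neq b-1\}
\]
together with the map $\iota:I\to\mathcal{A}_b$, $\iota(s)=s\con(b-1)\con(b-1)\con\cdots$. I would first check that $\iota$ is a bijection and pull back $<_{\mathrm{lex}}$ to an order $\prec$ on $I$, so that $(I,\prec)$ becomes order isomorphic to $(\mathcal{A}_b,<_{\mathrm{lex}})$ and hence, by the opening remarks of Section~\ref{Devl_sec}, to $\mathbb{Q}$. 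Since $Y\in[\mathcal{A}_b]^\eta$ is likewise order isomorphic to $\mathbb{Q}$, Cantor's back-and-forth theorem then yields an order isomorphism $\phi:I\to Y$.

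Next I would transport $\phi$ to a filtering. Writing $\sigma:b^{<\omega}\to I\cup\{\emptyset\}$ for the map that strips trailing $b-1$'s, set
\[
m_s=\begin{cases}\phi(\sigma(s))&\text{if }\sigma(s)\neq\emptyset,\\ \max b^\omega&\text{if }\sigma(s)=\emptyset,\end{cases}
\]
so each $m_s$ lies in $\mathcal{A}_b\cup\{\max b^\omega\}$ and is eventually equal to $b-1$. Enumerating $b^n$ lexicographically as $(s^n_i)_{i=0}^{b^n-1}$, I would define $V_{s^n_i}$ to be the lex-interval with maximum $m_{s^n_i}$ and minimum equal to the topological successor of $m_{s^n_{i-1}}$, with the convention $\min V_{s^n_0}=\min b^\omega$; Fact~\ref{finding_max} then ensures each $V_{s^n_i}$ is clopen. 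To check that $(V_s)_{s\in b^{<\omega}}$ is a filtering, condition (i) of Definition~\ref{filtering_defn} reduces to $m_\emptyset=\max b^\omega$, condition (ii) follows because the level-$(n+1)$ children of $V_s$ are exactly the $b$ consecutive level-$(n+1)$ intervals lying inside $V_s$, and condition (iii) reduces, via $\sigma(s\con(b-1))=\sigma(s)$ together with the order-preservation of $\phi$, to the chain
\[
s\con(0)\prec s\con(1)\prec\cdots\prec s\con(b-2)\prec\sigma(s)
\]
in $(I\cup\{\emptyset\},\prec)$ with $\emptyset$ declared maximal, which is routine from the description of $\prec$ via $\iota$. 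Lemma~\ref{lem_indentification} then supplies the unique $f\in C^\uparrow_{\mathrm{sur}}(b^\omega)$ with $(U^f_s)=(V_s)$, and $Y_f=\{m_s:s\in b^{<\omega}\}\setminus\{\max b^\omega\}=\phi(I)=Y$.

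For the second statement, I would introduce $\psi_h:\mathcal{A}_b\to Y_h$ by $\psi_h(y)=\max h^{-1}(\{y\})$, the maximum existing by Fact~\ref{finding_max}. Writing $y=y_0\cdots y_{k-1}(b-1)(b-1)\cdots$ with $y_{k-1}<b-1$ gives $h^{-1}(\{y\})=\bigcap_{n\geq k}U^h_{y|n}$, and the stability of the rightmost-child maximum in the filtering $(U^h_s)$ forces $\psi_h(y)=\max U^h_{y|k}\in Y_h$. Monotonicity of $h$ makes $\psi_h$ strictly $<_{\mathrm{lex}}$-preserving, and surjectivity onto $Y_h$ follows because every $t=\max U^h_s\in Y_h$ equals $\psi_h(\iota(\sigma(s)))$; hence $\psi_h$ is an order isomorphism $\mathcal{A}_b\to Y_h$, sending $[\mathcal{A}_b]^\eta$ bijectively onto $[Y_h]^\eta$. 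From $U^{f\circ h}_s=h^{-1}(U^f_s)$, together with the monotonicity and surjectivity of $h$, one deduces $\max U^{f\circ h}_s=\psi_h(\max U^f_s)$, whence $Y_{f\circ h}=\psi_h(Y_f)$. Combining with the first part, which gives $\{Y_f:f\in C^\uparrow_{\mathrm{sur}}(b^\omega)\}=[\mathcal{A}_b]^\eta$, the conclusion reads
\[
\{Y_{f\circ h}:f\in C^\uparrow_{\mathrm{sur}}(b^\omega)\}=\psi_h([\mathcal{A}_b]^\eta)=[Y_h]^\eta.
\]
The main obstacle I anticipate lies in the second paragraph: pinning down the intrinsic order $\prec$ on $I$ so that an arbitrary order-preserving $\phi:I\to Y$ automatically produces lex-compatible maxima at every level; once $\iota$ is identified as the natural bridge to $\mathcal{A}_b$, the filtering check and the second part fall out smoothly.
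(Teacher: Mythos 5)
Your proof is correct, but it takes a genuinely different route from the paper's in both halves. For the first claim the paper fixes an enumeration $\{y_n:n<\omega\}$ of $Y$ and builds the filtering recursively, splitting each cell $U_s$ at the least-indexed points of $Y$ lying in it, so that surjectivity of the resulting set of maxima onto $Y$ comes from an induction along the enumeration; you instead note that the maxima of the canonical filtering are indexed by $I=\{s:s(|s|-1)\neq b-1\}$ via $\iota$, obtain an order isomorphism $\phi:I\to Y$ from Cantor's back-and-forth theorem, and transport the canonical structure, the price being exactly the lex-compatibility check you flag (which must include the cross-parent comparison at each level, not only condition (iii) of Definition~\ref{filtering_defn} within one parent, but which does go through since the needed inequalities are $\phi$-images of inequalities holding for the $\iota$-copies in $\mathcal{A}_b$). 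For the second claim the paper picks $g$ with $Y_g=Y$, verifies the refinement $(U^g_s)_{s\in b^{<\omega}}\preccurlyeq(U^h_s)_{s\in b^{<\omega}}$, and invokes the factorization half of Lemma~\ref{lem_indentification} to write $g=f\circ h$; you bypass both the refinement argument and that part of Lemma~\ref{lem_indentification} by exhibiting the conjugating order isomorphism $\psi_h(y)=\max h^{-1}(\{y\})$ of $\mathcal{A}_b$ onto $Y_h$ and the identity $Y_{f\circ h}=\psi_h(Y_f)$, reducing everything to the first part --- arguably a cleaner explanation of why $[Y_h]^\eta$ is just a copy of $[\mathcal{A}_b]^\eta$, while the paper's route stays entirely within its already-developed filtering machinery. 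Two small points to tighten: Fact~\ref{finding_max} only gives the necessary condition on endpoints of clopen sets, so the clopenness of your intervals needs the (easy) converse, namely that a lex-interval whose maximum is eventually $b-1$ and whose minimum is eventually $0$ is clopen; and the identity $\max U^{f\circ h}_s=\psi_h(\max U^f_s)$ should be asserted only for cells with $\max U^f_s\neq\max b^\omega$, the remaining cells contributing $\max b^\omega$ on both sides and being discarded from $Y_f$ and $Y_{f\circ h}$ in any case.
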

\begin{proof}
  Let $Y\in [\mathcal{A}_b]^\eta$. In order to determine $f\in C^\uparrow_{\text{sur}}(b^\omega)$ such that $Y_f=Y$, by Lemma \ref{lem_indentification}, it suffices to construct a filtering $(U_s)_{s\in b^{<\omega}}$ on $b^\omega$ such that $\{\max U_s:s\in b^{<\omega}\}\setminus\{\max b^\omega\}= Y$. Since $Y$ is countable, let $\{y_n:n<\omega\}$ be an enumeration of $Y$. We set $U_\emptyset=b^\omega$. Suppose that for some $k<\omega$ with $k>0$ the elements $(U_s)_{s\in b^{<k}}$ have been constructed. We are going to construct $(U_s)_{s\in b^k}$. Let $s\in b^{k-1}$. We set $i_0=\min\{i<\omega: y_i\in U_s\setminus\{\max U_s\}\}$ and
  for every $p<b-1$ with $p>0$ we inductively define $i_p=\min\{i<\omega: y_{i_{p-1}}<_\text{lex}y_i<_\text{lex}\max U_s\}$. We set $U_{s^\con (0)}=\{x\in U_s:x\leq_\text{lex} y_{i_0}\}$, for every $p<b-1$ with $p>0$ we set $U_{s^\con (p)}=\{x\in U_s:y_{i_{p-1}}<_\text{lex}x\leq_\text{lex} y_{i_p}\}$ and $U_{s^\con (b-1)}=\{x\in U_s:y_{i_{b-2}}<_\text{lex}x\}$.

  It is clear that $(U_s)_{s\in b^{<\omega}}$ is a filtering on $b^\omega$ and $\{\max U_s:s\in b^{<\omega}\}\setminus \{\max b^\omega\}\subseteq Y$. The inverse inclusion can be easily checked by showing using induction on $k$ that $\{y_i:i<k\}\subseteq \{\max U_s:s\in b^k\}$. The proof of the first part of the lemma is complete.

  Let $h\in C^\uparrow_{\text{sur}}(b^\omega)$. Also let $Y\in[Y_h]^\eta$. By the first part of the lemma we may pick $g\in C^\uparrow_{\text{sur}}(b^\omega)$ satisfying $Y_g=Y$. Since $Y_g\in[Y_h]^\eta$, we have that $(U_s^g)_{s\in b^{<\omega}}\preccurlyeq (U_s^h)_{s\in b^{<\omega}}$. Indeed, let $k<\omega$. Then $\{\max U^g_{s^k_i}:i\mik l_k\}\subseteq Y_g\cup\{\max b^\omega\}\subseteq Y_h\cup\{\max b^\omega\}$ (see the notation introduced before Lemma \ref{finding_function}) and therefore we may pick $k'<\omega$ such that $\{\max U^g_{s^k_i}:i\mik l_k\}\subseteq \{\max U^h_{s^{k'}_j}:j\mik l_{k'}\}$. Thus, there exist $0\mik j_0<j_1<\ldots<j_{l_k}=l_{k'}$ such that $\max U^g_{s^k_i}=\max U^h_{s^{k'}_{j_i}}$ for all $i\mik l_k$. Let $I_0=\{s^{k'}_{j}:j\mik j_0\}$, for every $i\mik l_k$ with $i>0$ let $I_i=\{s^{k'}_{j}:j_{i-1}<j\mik j_i\}$ and observe that $U^g_{s^k_i}=\cup_{s\in I_i}U^h_s$, for all $i\mik l_k$. Hence $(U_s^g)_{s\in b^{<\omega}}\preccurlyeq (U_s^h)_{s\in b^{<\omega}}$. By Lemma \ref{lem_indentification} there exists $f\in C^\uparrow_{\text{sur}}(b^\omega)$ such that $g=f\circ h$. Hence $Y=Y_g=Y_{f\circ h}$ and therefore $[Y_h]^\eta\subseteq\{Y_{f\circ h}:f\in C^\uparrow_{\text{sur}}(b^\omega)\}$. The inverse inclusion is straightforward since for every $f\in C^\uparrow_{\text{sur}}(b^\omega)$ we have that $(U_s^{f\circ h})_{s\in b^{<\omega}}\preccurlyeq (U_s^h)_{s\in b^{<\omega}}$ and therefore $Y_{f\circ h}\in [Y_h]^\eta$.
\end{proof}

\section{Proof of Theorem \ref{main_thm}}

%Moreover, for $b<\omega$ with $b\meg2$ and $k$ positive integer, by $\mathcal{C}_b^k$, we denote the set of all partitions $(U_i)_{i<k}$ of $b^\omega$, into $k$ many clopen (disjoint) intervals. Setting $\mathcal{A}_b$ the set consisting of the eventually equal to $b-1$ elements of $b^\omega$, excluding the maximum of $b^\omega$, we may identify $\mathcal{C}_b^k$ with the set $[\mathcal{A}_b]^{k-1}_{\leq_\text{lex}}$ of all $\leq_\text{lex}$-ordered $(k-1)$-tuples of $\mathcal{A}_b$. Indeed, let $T:\mathcal{C}^k_b\to[\mathcal{A}_b]^{k-1}_{\leq_\text{lex}}$ such that $T((U_i)_{i<k})=(\min U_i)_{i<k-1}$. It is not hard to see that the map $T$ is $1-1$ and onto. Finally, observe that the partial ordered set $(\mathcal{A}_b,\leq_\text{lex})$ is a countable dense unbounded linear ordering.

 By the definition of the metric $\rho_\infty$ on $C^\uparrow_{\text{sur}}(b^\omega)$ the following is immediate.

\begin{lem}
  \label{being_close}
  Let $b<\omega$ with $b\meg2$ and $\ee$ be a positive real. For every $f,g\in C^\uparrow_{\text{sur}}(b^\omega)$ we have that $\rho_\infty(f,g)<\ee$ if and only if $(\max U^f_s)_{s\in b^{ k}}=(\max U^g_s)_{s\in b^{k}}$, where $k=\lfloor\log_2(1/\ee)\rfloor+1$.
\end{lem}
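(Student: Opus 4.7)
The plan is to translate $\rho_\infty(f,g)<\varepsilon$ into a condition on the level-$k$ partitions $(U^f_s)_{s\in b^k}$ and then to show that such a partition is completely determined by its sequence of maxima.

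First I would unfold the definition. Since $\rho_b(y,z)=2^{-n_0}$ with $n_0$ the first coordinate of disagreement, the inequality $\rho_b(y,z)<\varepsilon$ is equivalent to $y$ and $z$ agreeing on the first $k$ coordinates, where $k=\lfloor\log_2(1/\varepsilon)\rfloor+1$ is exactly the least integer with $2^{-k}<\varepsilon$. Hence $\rho_\infty(f,g)<\varepsilon$ holds if and only if $f(x)|k=g(x)|k$ for every $x\in b^\omega$, which in turn is the same as saying $x\in f^{-1}(W_s)\Leftrightarrow x\in g^{-1}(W_s)$ for every $s\in b^k$. Recalling that $U^f_s=f^{-1}(W_s)$, this reduces the lemma to the equivalence
\[
(U^f_s)_{s\in b^{k}}=(U^g_s)_{s\in b^{k}}\ \Longleftrightarrow\ (\max U^f_s)_{s\in b^{k}}=(\max U^g_s)_{s\in b^{k}}.
\]

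The forward direction is immediate. For the converse I would enumerate $b^k$ in lex-increasing order as $s^k_0<_\text{lex}\cdots<_\text{lex}s^k_{l_k}$ and argue that for any $f\in C^\uparrow_{\text{sur}}(b^\omega)$ the maxima $(\max U^f_{s^k_i})_{i\leq l_k}$ are $\leq_\text{lex}$-increasing. This follows by an easy induction on $k$ from clause (iii) of Definition \ref{filtering_defn} together with the nesting (ii): within each $U^f_s$ the children have increasing maxima, and blocks of children sit in lex order by virtue of being consecutive intervals inside $U^f_s$. Because the $U^f_{s^k_i}$ partition $b^\omega$ into clopen intervals, this lex-increasing list of maxima forces
\[
U^f_{s^k_0}=\{x\in b^\omega:x\leq_\text{lex}\max U^f_{s^k_0}\},
\]
and, for $1\leq i\leq l_k$,
\[
U^f_{s^k_i}=\{x\in b^\omega:\max U^f_{s^k_{i-1}}<_\text{lex}x\leq_\text{lex}\max U^f_{s^k_i}\}.
\]
The analogous formulas hold for $g$, so equal maxima force $U^f_{s^k_i}=U^g_{s^k_i}$ for every $i$.

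I do not anticipate any serious obstacle: the content of the lemma is bookkeeping between the metric $\rho_b$ and the truncation map $x\mapsto x|k$, together with the observation that a finite clopen interval partition of $b^\omega$ is recoverable from its ordered list of maxima. The only point requiring a moment of care is that the $\leq_\text{lex}$-monotonicity of maxima asserted in (iii) of the filtering definition, which is a local statement about children of a single node, does indeed propagate to the global statement across the whole level $b^k$; this is where the inductive argument above enters.
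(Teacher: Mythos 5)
Your proof is correct; the paper offers no argument for this lemma at all (it is stated as immediate from the definition of $\rho_\infty$), and your write-up is exactly the natural unfolding of that definition: agreement of $f(x)$ and $g(x)$ on the first $k$ coordinates for all $x$, i.e.\ equality of the level-$k$ partitions $(U^f_s)_{s\in b^k}=(U^g_s)_{s\in b^k}$, which in turn is equivalent to equality of the tuples of maxima because a finite partition of $b^\omega$ into clopen intervals is recoverable from its indexed list of maxima. Nothing further is needed.
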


We are ready to give the proof of the main result of this note.

\begin{proof}
  [Proof of Theorem \ref{main_thm}]
  Let $\ee$ be a positive real and $k=\lfloor\log_2(1/\ee)\rfloor+1$. We set $t=t_{b^k-1}$. Let $K$ be a
   positive integer and $c:C^\uparrow_{\text{sur}}(b^\omega)\to K$ be a coloring of
   $C^\uparrow_{\text{sur}}(b^\omega)$. Let $\ell=b^k-1$ and $(s_i)_{i=0}^\ell$ the increasing enumeration of the set
    $b^k$ with respect to the lexicographical order on it. As we have already mentioned
    $(\mathcal{A}_b,\leq_\text{lex})$ is a countable dense unbounded linear order and therefore order
    isomorphic to $\mathbb{Q}$. For every $\mathbf{x}$ in $[\mathcal{A}_b]^\ell$, using Lemma
    \ref{finding_function}, we fix $f_\mathbf{x}$ in $C^\uparrow_{\text{sur}}(b^\omega)$ satisfying $(\max U^{f_\mathbf{x}}_{s_i})_{i<\ell}=\mathbf{x}$. We define a coloring $\widetilde{c}:[\mathcal{A}_b]^\ell\to K$, by
     setting $\widetilde{c}(\mathbf{x})=c(f_\mathbf{x})$. By Corollary \ref{Devlin_Cor} there exists a subset
      $Y$ of $\mathcal{A}_b$ order isomorphic to $\mathbb{Q}$ and $B\subseteq K$ of cardinality at most $t$ such
      that the image of $[Y]^\ell$ through $\widetilde{c}$ is equal to $B$. By Lemma \ref{lem_copies_Q_functions} we may pick $h$ in $C^\uparrow_{\text{sur}}(b^\omega)$ such that $Y_h=Y$. Then for every $f\in C^\uparrow_{\text{sur}}(b^\omega)$, setting $\mathbf{x}=(\max U^{f\circ h}_{s_i})_{i<\ell}$, by Lemma
      \ref{being_close} we have that $\rho_\infty(f\circ h, f_\mathbf{x})<\ee$ and by the definition of the coloring $\widetilde{c}$ and the choice of $B$ we get that $c(f_\mathbf{x})=\widetilde{c}(\mathbf{x})\in B$.
\end{proof}

\section{Necessity of the approximations}\label{Aprox_sec}
%In order to state our first tool for the construction of the first counterexample we need some additional notation. For every $Y$ subset of $\mathbb{Q}$ order isomorphic to $\mathbb{Q}$ by $[Y]^\eta$ we denote the set of all subsets of $Y$ order isomorphic to $\mathbb{Q}$. Although the next result is well known, for the sake of completeness we include its proof.

We recall some notation from \cite{KT} adapted in our setting.
  A subset $T$ of $2^{<\omega}$ is called \emph{subtree} if for every $s, t$ in $b^{<\omega}$ with $t\in T$ and $s$ initial segment of $t$ we have that $s\in T$. A node $s$ of a subtree $T$ is called a \emph{splitting node} of $T$ if there exist $t,t'$ in $T$ such that $s^\con(0)$ is initial segment of $t$ and $s^\con(1)$ is initial segment of $t'$, while by $Sp(T)$ we denote the set of all splitting nodes of $T$. A subtree $T$ is called \emph{perfect} if for every $s\in T$ there exists $t\in Sp(T)$ such that $s$ is proper initial segment of $t$. For every perfect subtree $T$ we denote by
   \[Bd(T)=\{x\in 2^\omega: x|n\in T\;\text{for all}\;n<\omega\}\]
   the \emph{body} of $T$.
   For every subset $A$ of $2^\omega$ we set
  \[A^{\uparrow}=\{t\in 2^{<\omega}:\;\text{there exists}\;y\in A\;\text{such that}\;t\text{ is initial segment of }y\}.\]
%  and for every $t\in 2^{<\omega}$ we set  \[[t]=\]
  Finally, a subset $A$ of $2^\omega$ is called \emph{non scattered} if it contains a subset order isomorphic to $\mathbb{Q}$. We recall the following result form \cite{KT}.

\begin{lem}
  \label{scattered_lem}
  Let $A\subseteq2^\omega$. If $A$ is non scattered then the set $T=\{s\in 2^{<\omega}:W_s\cap A\;\text{is non scattered}\}$ is a perfect subtree, where $W_s=\{x\in b^\omega:s$ is an initial segment of $x\}$.
  % Moreover $T$ is the maximal perfect subtree contained in $A^\uparrow$.
\end{lem}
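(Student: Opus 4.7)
The plan is to check first that $T$ is a nonempty subtree -- both immediate -- and then to establish perfectness by a branch-analysis contradiction argument. Subtree-ness is clear: if $s \sqsubseteq t$ then $W_s \supseteq W_t$, so $W_s \cap A \supseteq W_t \cap A$ is non-scattered whenever $W_t \cap A$ is. Nonemptiness holds because $W_\emptyset \cap A = A$ is non-scattered by hypothesis.

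The essential combinatorial input I will use is that the union of two scattered subsets of $(2^\omega, \leq_\text{lex})$ is scattered, or equivalently, that no copy of $\mathbb{Q}$ can be partitioned into two scattered pieces. This is classical and is also the $\ell = 1$ case of Devlin's Theorem (where $t_1 = 1$), and I would either invoke it from \cite{KT} or recall it in place. Applied to the partition $W_s \cap A = (W_{s^\con(0)} \cap A) \cup (W_{s^\con(1)} \cap A)$, it gives the one-step dichotomy: if $s \in T$, then $s^\con(0) \in T$ or $s^\con(1) \in T$.

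For perfectness, fix $s \in T$ and suppose, toward a contradiction, that no proper extension of $s$ in $T$ is a splitting node. For each child $s^\con(c) \in T$, no proper extension of $s^\con(c)$ in $T$ is splitting either, so iterating the one-step dichotomy upward produces a unique $x_c \in 2^\omega$ extending $s^\con(c)$ with $x_c|n \in T$ for every $n \geq |s|+1$, while for every such $n$ the off-branch node $v_n := (x_c|n)^\con(1 - x_c(n))$ lies outside $T$ and hence has $W_{v_n} \cap A$ scattered. Writing $W_{s^\con(c)} = \{x_c\} \sqcup \bigsqcup_{n \geq |s|+1} W_{v_n}$, I inspect the lex order: those $W_{v_n}$ with $x_c(n) = 1$ sit to the left of $x_c$ in strictly increasing lex order as their indices run through a subset of $\omega$, and those with $x_c(n) = 0$ sit to the right of $x_c$ in strictly decreasing lex order. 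Hence $W_{s^\con(c)} \cap A$ is order-isomorphic to a concatenation of a sum of scattered blocks indexed by a well order of type at most $\omega$, possibly one further point, and a sum of scattered blocks indexed by a reverse well order of type at most $\omega^*$. By Hausdorff's closure of the class of scattered linear orders under sums indexed by scattered orders, $W_{s^\con(c)} \cap A$ is scattered.

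To close the argument: by the one-step dichotomy at least one of $s^\con(0), s^\con(1)$ lies in $T$, and for each such child $W_{s^\con(c)} \cap A$ is scattered by the preceding argument; for any child not in $T$, the corresponding $W_{s^\con(c)} \cap A$ is scattered by the definition of $T$. Hence $W_s \cap A$ is a union of two scattered sets and is therefore scattered, contradicting $s \in T$. I expect the main technical obstacle to be the lex-order bookkeeping in the branch argument -- verifying carefully that the off-branch blocks above and below $x_c$ form strictly monotone sequences of order types at most $\omega$ and $\omega^*$ -- in order to legitimately invoke Hausdorff's closure principle.
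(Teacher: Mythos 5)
Your argument is correct. Note, however, that the paper does not prove this lemma at all: it is quoted from the Kurili\'c--Todorcevic paper \cite{KT} (``We recall the following result from \cite{KT}''), so there is no in-paper proof to compare against; what you have supplied is a self-contained proof of the cited fact. Your route is the natural one and all steps check out: closure under initial segments and nonemptiness are immediate; the one-step dichotomy is exactly the classical fact that a union of two scattered suborders is scattered (equivalently, the $\ell=1$, $t_1=1$ instance of Devlin); and in the non-splitting branch analysis the decomposition $W_{s\con(c)}=\{x_c\}\cup\bigcup_{n\geq|s|+1}W_{v_n}$ together with the observation that each $W_{v_n}$ is a lex-interval lying entirely left of $x_c$ (increasingly in $n$) when $x_c(n)=1$ and entirely right of $x_c$ (decreasingly in $n$) when $x_c(n)=0$ is exactly the bookkeeping needed, so the sum is indexed by an order embeddable in $\omega+1+\omega^*$ and Hausdorff's closure theorem applies. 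If you want to avoid invoking Hausdorff, there is a shortcut: any copy $Q$ of $\mathbb{Q}$ inside $W_{s\con(c)}\cap A$ meets each block $W_{v_n}$ in an interval of $Q$ that is scattered, hence in at most one point, so $Q$ would embed into $\omega+1+\omega^*$, a contradiction; also, once you know $W_{s\con(c)}\cap A$ is scattered for a child $s\con(c)\in T$, that already contradicts $s\con(c)\in T$ directly, so the final reassembly into a two-piece union at $s$ is not needed.
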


Although the following result is well known, we could not find a reference and we include its proof
for the convenience of the reader.

\begin{thm}
  \label{coloring_Q}
  There exists a coloring of $[\mathbb{Q}]^\eta$ into $\omega$ colors such that for every $Y\in[\mathbb{Q}]^\eta$, the set $[Y]^\eta$ witnesses all the colors.
\end{thm}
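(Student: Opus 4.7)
The plan is to construct the coloring by transfinite recursion of length $2^{\aleph_0}$, exploiting that for every $Y \in [\mathbb{Q}]^\eta$ the set $[Y]^\eta$ has full cardinality $2^{\aleph_0}$. The picture is that there are enough sub-copies of $Y$ to absorb the requirement that every color be hit inside $[Y]^\eta$, and a straightforward bookkeeping then realizes such a coloring.

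First I would establish the cardinality claim $|[Y]^\eta| = 2^{\aleph_0}$ for every $Y \in [\mathbb{Q}]^\eta$. For the lower bound, using $Y \cong \mathbb{Q}$, it suffices to exhibit $2^{\aleph_0}$ many pairwise distinct $\mathbb{Q}$-subcopies of $\mathbb{Q}$: for each $x \in \mathbb{R} \setminus \mathbb{Q}$, the set $\mathbb{Q} \cap (-\infty,x)$ is a countable dense linear order without endpoints, and distinct $x$ yield distinct such subsets. Alternatively, one may invoke Lemma \ref{scattered_lem} via the identification $\mathbb{Q} \cong \mathcal{A}_2$ to obtain a perfect subtree $T_Y \subseteq 2^{<\omega}$, whose body has $2^{\aleph_0}$ branches and thus furnishes $2^{\aleph_0}$ distinct non-scattered subsets of $Y$. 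The upper bound follows from $|[Y]^\eta| \leq |\mathcal{P}(Y)| = 2^{\aleph_0}$.

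Next I would use the axiom of choice to enumerate the pairs $[\mathbb{Q}]^\eta \times \omega = \{(Y_\alpha, n_\alpha) : \alpha < 2^{\aleph_0}\}$ so that every pair appears at least once. By transfinite recursion, at stage $\alpha$ I pick $Z_\alpha \in [Y_\alpha]^\eta \setminus \{Z_\beta : \beta < \alpha\}$; this is possible because $|[Y_\alpha]^\eta| = 2^{\aleph_0}$ while $|\{Z_\beta : \beta < \alpha\}| \leq |\alpha| < 2^{\aleph_0}$. The $Z_\alpha$'s are thus pairwise distinct, and I define the coloring $c: [\mathbb{Q}]^\eta \to \omega$ by $c(Z_\alpha) = n_\alpha$ for each $\alpha$ and $c(W) = 0$ for any $W$ outside $\{Z_\alpha : \alpha < 2^{\aleph_0}\}$; well-definedness is immediate from the distinctness of the $Z_\alpha$'s.

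The verification is then immediate: given $Y \in [\mathbb{Q}]^\eta$ and $n \in \omega$, the pair $(Y,n)$ appears at some stage $\alpha$, and $Z_\alpha \in [Y]^\eta$ carries color $n$, so $[Y]^\eta$ witnesses every color in $\omega$. The main obstacle is the cardinality computation $|[Y]^\eta| = 2^{\aleph_0}$ in the first step; once that is in place, the transfinite bookkeeping is entirely routine.
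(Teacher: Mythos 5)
Your proposal is correct, but it follows a genuinely different route from the paper. You prove the statement by pure cardinality bookkeeping: every $Y\in[\mathbb{Q}]^\eta$ contains $2^{\aleph_0}$ pairwise distinct subcopies of $\mathbb{Q}$ (the cut argument, transported to $Y$ by an order isomorphism, is all one needs; your alternative remark via Lemma \ref{scattered_lem} is not quite immediate as stated, since a branch of $T_Y$ does not by itself yield a non-scattered subset of $Y$ without a further argument, but this is only an aside), so one can well-order $[\mathbb{Q}]^\eta\times\omega$ in type $2^{\aleph_0}$, recursively choose pairwise distinct witnesses $Z_\alpha\in[Y_\alpha]^\eta$, set $c(Z_\alpha)=n_\alpha$ and color everything else $0$. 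Since the theorem asks only for the existence of a coloring, with no definability requirement, this transfinite diagonalization does prove it, and in fact gives the stronger version with $2^{\aleph_0}$ colors. The paper instead constructs an explicit coloring without appealing to a well-ordering of the continuum: identifying $\mathbb{Q}$ with $\mathcal{A}_2$, it associates to $Y$ the perfect tree $T_Y$ of nodes $s$ for which $W_s\cap Y$ is non scattered (Lemma \ref{scattered_lem}), takes the $\lex$-minimal and $\lex$-maximal branches of its body (Fact \ref{finding_max}), and defines $c(Y)$ by counting the splitting nodes along the maximal branch lying below the level of the second splitting node along the minimal branch; a prescribed color is then realized inside any given $Y$ by explicitly deleting an interval of $Y$ between two suitable splitting nodes. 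What the paper's approach buys is a concrete, choice-free coloring, which makes the counterexample a sharper counterpoint to the Borel dual Ramsey theorem cited in the introduction; what your approach buys is brevity and flexibility, at the price of an intrinsically non-constructive coloring. Either way, Theorem \ref{coloring_Q} as stated is established.
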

\begin{proof}
  Since $\mathcal{A}_2$ and $\mathbb{Q}$ are order isomorphic, it suffices to construct $c:[\mathcal{A}_2]^\eta\to\omega$ such that for every $Y\in [\mathcal{A}_2]^\eta$ we have that $[Y]^\eta$ witnesses all the colors.
  We define $c:[\mathcal{A}_2]^\eta\to\omega$ as follows.
  Let $Y\in [\mathcal{A}_2]^\eta$. We set
  \[T_Y=\{s\in 2^{<\omega}:W_s\cap Y\;\text{is non scattered}\}.\]
  By Lemma \ref{scattered_lem}
  we have that $T_Y$ is a perfect subtree. Since $T_Y$ is
  perfect we have that $Bd(T_Y)$ is a non-empty closed subset of $2^\omega$ (see \cite{K})
  and by Fact \ref{finding_max} it admits a maximum $m_Y$ and a minimum $\ell_Y$.
  Let $(t^Y_n)_{n<\omega}$ be the $\sqsubseteq$-increasing enumeration of the set
  $\{\ell_Y\}^\uparrow\cap Sp(T)$ and $(s^Y_n)_{n<\omega}$ be the $\sqsubseteq$-increasing enumeration of the set
  $\{m_Y\}^\uparrow\cap Sp(T)$. We set $c(Y)=\max\{i<\omega:|s^Y_i|<|t^Y_1|\}$.

  Let $Y\in[\mathcal{A}_2]^\eta$ and a color $r<\omega$. We will construct $Z\in[Y]^\eta$ such that $c(Z)=r$. We pick $n<\omega$ such that $m=\max\{i<\omega:|s^Y_i|<|t^Y_n|\}\meg r$. For notational simplicity we set $t_0=t^Y_n$ and $s_0=s^Y_{m-r+1}$. We define
  \[T=\{t\in T_Y:t\;\text{is}\;\sqsubseteq\text{-comparable with either}\;t_0\;\text{or}\;s_0\}.\]
%  Observe that setting $[t]=\{s\in2^{<\omega}:t\sqsubseteq s\}$  for every $t\in2^{<\omega}$,
%  \begin{equation}
%    \label{e02}
%    T=T_Y\setminus \Big(\bigcup_{1\mik i<n}[t^{Y\con}_i(1)]\cup\bigcup_{1\mik i\mik m-r}[s^{Y\con}_i(0)]\Big).
%  \end{equation}
  We set $I=\{y\in 2^\omega:\max_\text{lex} W_{t_0}<_\text{lex}y<_\text{lex}\min_\text{lex} W_{s_0}\}$, $Z'=Y\setminus I$ and $Z$ a $\subseteq$-maximal subset of $Z'$ order isomorphic to $\mathbb{Q}$. Clearly $Z\in[Y]^\eta$. It suffices to show that $T_Z=T$. It is easy to check that $|Z'\setminus Z|\mik1$. Thus  setting
  \[T_{Z'}=\{s\in 2^{<\omega}:W_s\cap Z'\;\text{is non scattered}\}\]
  we have that $T_Z=T_{Z'}$. By the definition of $T$ and $I$, for every $t\in T_Y\setminus T$  we have that
  $W_t\subseteq I$, while for every $t\in T$ we have that there exists $s_t\in T$ such that $t$ is initial segment of $s_t$ and $W_{s_t}\cap I=\emptyset$. Hence, for
  every $t\in T_Y\setminus T$ we have that $W_t\cap Z'=\emptyset$
  and therefore $t\not\in T_{Z'}$, while for every $t\in T$ we
  have that $W_t\cap Z'\supseteq W_{s_t}\cap Z'=W_{s_t}\cap Y$ and therefore $t\in T_{Z'}$.
  That is, $T_{Z'}\cap T_Y=T$. Since $Z'\subseteq Y$, we have
  that $T_{Z'}\subseteq T_Y$. Thus $T_{Z'}=T_Y$ and the proof is complete.
\end{proof}

\begin{cor}
  Let $b<\omega$ with $b\meg2$. There exists a coloring $c:C^\uparrow_{\text{sur}}(b^\omega)\to\omega$ such that for every $h\in C^\uparrow_{\text{sur}}(b^\omega)$ the set $\{f\circ h: f\in C^\uparrow_{\text{sur}}(b^\omega)\}$ witnesses all the colors.
\end{cor}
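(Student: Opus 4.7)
The plan is to pull back the coloring of $[\mathbb{Q}]^\eta$ provided by Theorem \ref{coloring_Q} through the assignment $f\mapsto Y_f$, and then invoke Lemma \ref{lem_copies_Q_functions} to translate the ``witnesses all colors'' property on order isomorphic copies of $\mathbb{Q}$ into the corresponding property on sets of the form $\{f\circ h:f\in C^\uparrow_{\text{sur}}(b^\omega)\}$.

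More concretely, I would first fix an order isomorphism between $(\mathcal{A}_b,\leq_\text{lex})$ and $\mathbb{Q}$, which exists because $(\mathcal{A}_b,\leq_\text{lex})$ is a countable unbounded dense linear order (as noted at the beginning of Section \ref{Devl_sec}). Transporting the coloring from Theorem \ref{coloring_Q} along this isomorphism produces a coloring $c_0:[\mathcal{A}_b]^\eta\to\omega$ with the property that for every $Y\in[\mathcal{A}_b]^\eta$, the family $[Y]^\eta$ witnesses all the colors under $c_0$.

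Next I would define $c:C^\uparrow_{\text{sur}}(b^\omega)\to\omega$ by $c(f)=c_0(Y_f)$. This is well defined because $Y_f\in[\mathcal{A}_b]^\eta$ for every $f\in C^\uparrow_{\text{sur}}(b^\omega)$, as remarked just before Lemma \ref{lem_copies_Q_functions}. To verify the required property, fix $h\in C^\uparrow_{\text{sur}}(b^\omega)$ and a color $r<\omega$. By the second part of Lemma \ref{lem_copies_Q_functions}, the set $\{Y_{f\circ h}:f\in C^\uparrow_{\text{sur}}(b^\omega)\}$ coincides with $[Y_h]^\eta$. Since $Y_h\in[\mathcal{A}_b]^\eta$, the choice of $c_0$ guarantees the existence of $Z\in[Y_h]^\eta$ with $c_0(Z)=r$, and writing $Z=Y_{f\circ h}$ for a suitable $f$ yields $c(f\circ h)=r$. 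This is precisely what is required.

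The routine verification is the only content here, and no single step poses a genuine obstacle; the construction reduces entirely to combining Theorem \ref{coloring_Q} with Lemma \ref{lem_copies_Q_functions}, which is the reason this statement is phrased as a corollary.
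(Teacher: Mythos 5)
Your proposal is correct and follows essentially the same route as the paper: transport the coloring of $[\mathbb{Q}]^\eta$ from Theorem \ref{coloring_Q} to a coloring of $[\mathcal{A}_b]^\eta$ via an order isomorphism, pull it back through $f\mapsto Y_f$, and use the second part of Lemma \ref{lem_copies_Q_functions} to realize any prescribed color inside $\{f\circ h:f\in C^\uparrow_{\text{sur}}(b^\omega)\}$. No discrepancies worth noting.
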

\begin{proof}
  Since $\mathcal{A}_b$ and $\mathbb{Q}$ are order isomorphic, by Theorem \ref{coloring_Q}
  we have that there exists a coloring $\widetilde{c}:[\mathcal{A}_b]^\eta\to \omega$ such that for every $Y\in[\mathcal{A}_b]^\eta$ the set $[Y]^\eta$ witnesses all the colors. We define $c:C^\uparrow_{\text{sur}}(b^\omega)\to\omega$ as follows. For every $f\in C^\uparrow_{\text{sur}}(b^\omega)$ we set $c(f)=\widetilde{c}(Y_f)$. Then for every $h\in C^\uparrow_{\text{sur}}(b^\omega)$ and $r<\omega$ there exists $Z\in [Y_h]^\eta$ such that $\widetilde{c}(Z)=r$ and by Lemma \ref{lem_copies_Q_functions} there exists $f\in C^\uparrow_{\text{sur}}(b^\omega)$ such that $Z=Y_{f\circ h}$ and therefore $c(f\circ h)=\widetilde{c}(Y_{f\circ h})=\widetilde{c}(Z)=r$.
\end{proof}

\section{Accuracy of the Ramsey degree}\label{deg_sec}
In this section we show that the Ramsey degree estimated in the Theroem \ref{main_thm}
is the best possible. In particular, we have the following result.

\begin{prop}
  \label{counterexample2}
  Let $b\in\omega$ with $b\meg2$. Also let $\ee>0$. Then there exists a coloring
  $c:C^\uparrow_{\text{sur}}(b^\omega)\to t(\ee)$ such that for every $B\subseteq t(\ee)$ such that
  there exists
  $h\in C^\uparrow_{\text{sur}}(b^\omega)$ satisfying that for every
  $f\in C^\uparrow_{\text{sur}}(b^\omega)$ there exists $g\in  C^\uparrow_{\text{sur}}(b^\omega)$
  with $\rho_\infty(f\circ h, g)<\ee$ and $c(g)\in B$, we have that $B=t(\ee)$.
\end{prop}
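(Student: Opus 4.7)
The plan is to construct the desired coloring by mirroring the proof of Theorem \ref{main_thm}, using the \emph{lower bound} companion of Devlin's theorem---the coloring witnessing that $t_\ell$ is the exact Ramsey degree of $[\mathbb{Q}]^\ell$---in place of the upper bound Corollary \ref{Devlin_Cor}. This companion, also due to Devlin and found in [D], asserts that for every positive integer $\ell$ there is a coloring $\widetilde{c}:[\mathbb{Q}]^\ell\to t_\ell$ such that for every $Y\in[\mathbb{Q}]^\eta$ we have $\widetilde{c}([Y]^\ell)=t_\ell$. Transporting this via the order isomorphism between $(\mathcal{A}_b,\leq_\text{lex})$ and $\mathbb{Q}$, we obtain a coloring $\widetilde{c}:[\mathcal{A}_b]^\ell\to t_\ell$ with the same rainbow property, where $\ell=b^k-1$ and $k=\lfloor\log_2(1/\ee)\rfloor+1$, so that $t_\ell=t(\ee)$.

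Define $c:C^\uparrow_{\text{sur}}(b^\omega)\to t(\ee)$ by $c(f)=\widetilde{c}((\max U^f_{s_i^k})_{i<\ell})$, using the notation preceding Lemma \ref{finding_function}. Suppose $B\subseteq t(\ee)$ and $h\in C^\uparrow_{\text{sur}}(b^\omega)$ satisfy the hypothesis: for every $f$ there is $g$ with $\rho_\infty(f\circ h,g)<\ee$ and $c(g)\in B$. By Lemma \ref{being_close}, this closeness condition forces the level-$k$ tuples of maxima of $g$ and $f\circ h$ to coincide, so $c(g)=c(f\circ h)$, and the hypothesis reduces to
\[\widetilde{c}\bigl((\max U^{f\circ h}_{s_i^k})_{i<\ell}\bigr)\in B\quad\text{for every }f\in C^\uparrow_{\text{sur}}(b^\omega).\]
It then suffices to show that the map $f\mapsto(\max U^{f\circ h}_{s_i^k})_{i<\ell}$ surjects onto $[Y_h]^\ell$; combined with the rainbow property of $\widetilde{c}$ applied to $Y_h\in[\mathcal{A}_b]^\eta$, this forces $B\supseteq\widetilde{c}([Y_h]^\ell)=t(\ee)$.

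To establish this surjectivity, fix $\mathbf{x}=(x_0,\ldots,x_{\ell-1})\in[Y_h]^\ell$. Each $x_i=\max U^h_{t_i}$ for some $t_i\in b^{<\omega}$; extending $t_i$ by the letter $b-1$ preserves the maximum by condition (iii) of Definition \ref{filtering_defn}, so we may take all $t_i$ of a common length $m$, with $t_0\lex\cdots\lex t_{\ell-1}\lex t_\ell:=(b-1)^m$. Grouping the cells $U^h_t$ with $t\in b^m$ into $\ell+1$ consecutive blocks cut at the $t_i$'s yields a partition of $b^\omega$ into clopen intervals whose maxima are exactly $x_0,\ldots,x_{\ell-1},\max b^\omega$ and which coarsens $(U^h_s)_{s\in b^m}$. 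Extending this to a full filtering $(U^g_s)_{s\in b^{<\omega}}$ refined by $(U^h_s)_{s\in b^{<\omega}}$---aggregating consecutive level-$k$ blocks to define $U^g_s$ for $s\in b^{<k}$, and further subdividing each level-$k$ block using the tree structure of $h$'s filtering for $s\in b^{\geq k}$, in the spirit of the construction in Lemma \ref{lem_copies_Q_functions}---produces some $g\in C^\uparrow_{\text{sur}}(b^\omega)$ with $D(g)\preccurlyeq D(h)$ and $(\max U^g_{s_i^k})_{i<\ell}=\mathbf{x}$. By the second part of Lemma \ref{lem_indentification}, $g=f\circ h$ for some $f\in C^\uparrow_{\text{sur}}(b^\omega)$.

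The main obstacle is invoking the sharpness of Devlin's theorem, which is standard but external to this note; the filtering extension in the last paragraph is essentially the bookkeeping of Lemma \ref{lem_copies_Q_functions} constrained to prescribed level-$k$ behavior, and should cause no conceptual difficulty.
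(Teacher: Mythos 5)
Your proposal is correct and follows essentially the same route as the paper: the same coloring $c(f)=\widetilde{c}((\max U^f_{s_i^k})_{i<\ell})$ built from the sharpness form of Devlin's theorem transported to $\mathcal{A}_b$, the same use of Lemma \ref{being_close} to reduce the hypothesis to $c(f\circ h)\in B$, and the same key surjectivity fact. The only (minor) difference is that where the paper cites its Lemma \ref{finding_function_2} (proved by pushing the tuple forward through $h$ via Lemmas \ref{A_b_tuples_close} and \ref{finding_function}), you reprove that surjectivity directly by extending a prescribed level-$k$ coarsening of $(U^h_s)$ to a full filtering and invoking Lemma \ref{lem_indentification}; both arguments are equivalent in substance.
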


The above result is essential an application of the following result.

\begin{thm}
  [Devlin]
  \label{Devlin_counterexample}
  Let $\ell\in\omega$. Then there exists a coloring of $[\mathbb{Q}]^\ell$ into $t_\ell$ colors such that for
  every $Y\in [\mathbb{Q}]^\eta$, we have that $[Y]^\ell$ witnesses all the colors.
\end{thm}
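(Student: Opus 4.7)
The plan is to pull back the counterexample coloring from Theorem \ref{Devlin_counterexample} along the level-$k$ max map $f\mapsto(\max U^f_{s_i^k})_{i<\ell}$, where $k=\lfloor\log_2(1/\ee)\rfloor+1$ and $\ell=b^k-1$, so that $t(\ee)=t_\ell$. Since $(\mathcal{A}_b,\leq_\text{lex})$ is order-isomorphic to $\mathbb{Q}$, Theorem \ref{Devlin_counterexample} will deliver a coloring $\widetilde{c}:[\mathcal{A}_b]^\ell\to t_\ell$ with $\widetilde{c}([Y]^\ell)=t_\ell$ for every $Y\in[\mathcal{A}_b]^\eta$. I would then define $c:C^\uparrow_{\text{sur}}(b^\omega)\to t(\ee)$ by $c(f)=\widetilde{c}((\max U^f_{s_i^k})_{i<\ell})$.

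The next step is a reduction via Lemma \ref{being_close}: the inequality $\rho_\infty(f\circ h,g)<\ee$ is equivalent to equality of the level-$k$ max-tuples and so forces $c(g)=c(f\circ h)$. Thus whenever $h$ and $B$ satisfy the hypothesis of the proposition, the condition collapses to $c(f\circ h)\in B$ for all $f\in C^\uparrow_{\text{sur}}(b^\omega)$, and it suffices to show that every color $r<t_\ell$ is of the form $c(f\circ h)$ for some $f\in C^\uparrow_{\text{sur}}(b^\omega)$.

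Given such an $r$, I would apply the counterexample property of $\widetilde{c}$ to $Y_h\in[\mathcal{A}_b]^\eta$ to pick $\mathbf{x}=(x_0,\ldots,x_{\ell-1})\in[Y_h]^\ell$ with $\widetilde{c}(\mathbf{x})=r$, and then construct a filtering $(V_s)_{s\in b^{<\omega}}\preccurlyeq(U^h_s)_{s\in b^{<\omega}}$ with $\max V_{s_i^k}=x_i$ for every $i<\ell$. At level $k$ declare $V_{s_0^k}=\{y:y\leq_\text{lex}x_0\}$, $V_{s_i^k}=\{y:x_{i-1}<_\text{lex}y\leq_\text{lex}x_i\}$ for $0<i<\ell$, and $V_{s_\ell^k}=\{y:x_{\ell-1}<_\text{lex}y\}$; at levels $m<k$ take $V_s=\bigcup\{V_{s^\con t}:t\in b^{k-m}\}$; beyond level $k$ extend inside each cell $V_{s_i^k}$ by iterating the construction from the proof of Lemma \ref{lem_copies_Q_functions} applied to the dense $\mathbb{Q}$-copy $Y_h\cap V_{s_i^k}$. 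Since each $x_i\in Y_h$, every $V_s$ will lie in the algebra of $(U^h_s)$. Lemma \ref{lem_indentification} will then yield $g\in C^\uparrow_{\text{sur}}(b^\omega)$ with $D(g)=(V_s)$, and its second assertion will produce $f$ with $g=f\circ h$, giving $c(f\circ h)=\widetilde{c}(\mathbf{x})=r$ as required.

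The main technical point will be verifying that the prescribed $(V_s)$ is a legitimate filtering $\preccurlyeq(U^h_s)$: clopenness and nonemptiness of the interval cells, the strictly-increasing-max condition (iii) of Definition \ref{filtering_defn}, and algebra-membership. All three should reduce to the facts that $x_0,\ldots,x_{\ell-1}\in\mathcal{A}_b\cap Y_h$ form a strictly lex-increasing tuple (so the bounding points are eventually $b-1$, ensuring clopenness and algebra-membership) and that $Y_h$ is dense in every clopen sub-interval of $b^\omega$ it meets (enabling the iteration beyond level $k$), so no essential difficulty is anticipated.
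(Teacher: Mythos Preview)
Your proposal does not prove the stated theorem. Theorem~\ref{Devlin_counterexample} is Devlin's result about $[\mathbb{Q}]^\ell$: it asserts the existence of a $t_\ell$-coloring of $[\mathbb{Q}]^\ell$ that is unavoidable on every dense chain. The paper does not prove this; it is quoted from Devlin's thesis and used as a black box. What you have written is a proof sketch of Proposition~\ref{counterexample2}, which \emph{applies} Theorem~\ref{Devlin_counterexample} to build the coloring $c$ on $C^\uparrow_{\text{sur}}(b^\omega)$. Your very first sentence invokes Theorem~\ref{Devlin_counterexample} to obtain $\widetilde{c}$, so as a proof of Theorem~\ref{Devlin_counterexample} the argument is circular.

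If the intended target was Proposition~\ref{counterexample2}, then your outline is essentially the paper's own argument: define $c(f)=\widetilde{c}((\max U^f_{s_i^k})_{i<\ell})$, use Lemma~\ref{being_close} to reduce to showing every color is hit by some $c(f\circ h)$, pick $\mathbf{x}\in[Y_h]^\ell$ of color $r$, and realize $\mathbf{x}$ as the level-$k$ max tuple of some $f\circ h$. The only difference is that where you build a filtering $(V_s)\preccurlyeq(U^h_s)$ by hand and appeal to Lemma~\ref{lem_indentification}, the paper packages exactly this construction into Lemma~\ref{finding_function_2} (proved via Lemma~\ref{A_b_tuples_close} and Lemma~\ref{finding_function}) and simply invokes it. Your direct construction would work, but it duplicates what Lemma~\ref{finding_function_2} already provides.

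If you genuinely meant to prove Theorem~\ref{Devlin_counterexample} itself, none of the machinery here is relevant: that requires Devlin's combinatorial analysis of embedding types of $\ell$-subsets of $\mathbb{Q}$ inside $2^{<\omega}$ (via Milliken's tree theorem), and the count $t_\ell$ arises from enumerating those types.
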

%\begin{lem}
%  \label{A_b_close}
%  Let $b\in\omega$ with $b\meg2$. Also let $x\in \mathcal{A}_b$ and $f\in C^\uparrow_{\text{sur}}(b^\omega)$.
%  Then $f(x)\in \mathcal{A}_b$.
%\end{lem}

%\begin{proof}
%  Since $x\in \mathcal{A}$, we have that $x$ is eventually equal to $b-1$.
%  Setting $U=\{y\in b^\omega y\mik_\text{lex}x\}$, it is easy to check that $U$ is a clopen interval
%  with $\max U=x$. Since $f$ is continuous and $b^\omega$, we have that $f[I]$ is clopen too.
%  By Fact \ref{finding_max} we have that $f[I]$ admits a maximum which is element of $\mathcal{A}_b$.
%  Since $f$ is increasing, we have that $f[I]$ is an interval with $\max f[I]=f(\max I)=f(x)$. Hence $f(x)\in\mathcal{A}_b$.
%\end{proof}

\begin{lem}
  \label{A_b_tuples_close}
  Let $b\in\omega$ with $b\meg2$. Also let $\ell\in\omega$, $h\in C^\uparrow_{\text{sur}}(b^\omega)$ and
  $(x_i)_{i<\ell}\in [Y_h]^\ell$. We set
  $$U_0=\{x\in b^\omega:x\leq_\text{lex}x_0\}\text{ and }
  V_0=\{y\in b^\omega:y\leq_\text{lex}h(x_0)\},$$
  while for every $i=1,...,\ell-1$ we set
  $$U_i=\{x\in b^\omega:x_{i-1}<_\text{lex}x\leq_\text{lex}x_i\}\text{ and }
  V_i=\{y\in b^\omega:h(x_{i-1})<_\text{lex}y\leq_\text{lex}h(x_i)\}.$$
  Then $(h(x_i))_{i<\ell}\in[\mathcal{A}_b]^\ell$ and $(U_i)_{i<\ell}=(h^{-1}(V_i))_{i<\ell}$.
\end{lem}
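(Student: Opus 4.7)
My plan is to verify the two claims in turn. The core observation is that for each $x_i \in Y_h$ there exists $s_i \in b^{<\omega}$ with $x_i = \max U^h_{s_i}$, and since $h$ is a nondecreasing continuous surjection with $U^h_{s_i} = h^{-1}(W_{s_i})$, the set $h(U^h_{s_i})$ equals $W_{s_i}$, whence $h(x_i) = \max W_{s_i} = s_i \con (b-1, b-1, \ldots)$. Both halves of the lemma hinge on this identity.

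For the first claim, the identity immediately shows that $h(x_i)$ is eventually equal to $b-1$. To rule out $h(x_i) = \max b^\omega$, I would note that $h^{-1}(\{\max b^\omega\})$ is a closed set that is upward closed in $(b^\omega, \leq_\text{lex})$, since $h$ is continuous and nondecreasing; hence it has the form $\{y : y \geq_\text{lex} a\}$ for some $a \in b^\omega$. If one had $s_i = (b-1)^k$ for some $k \meg 0$, then $W_{s_i} \ni \max b^\omega$ would give $U^h_{s_i} \supseteq h^{-1}(\{\max b^\omega\})$, forcing $x_i = \max U^h_{s_i} = \max b^\omega$, contrary to $x_i \in \mathcal{A}_b$. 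Hence $s_i \neq (b-1)^k$ and $h(x_i) \in \mathcal{A}_b$. Strict monotonicity $h(x_i) <_\text{lex} h(x_{i+1})$ then follows from the nondecreasing property: if equality held, then $x_{i+1} \in h^{-1}(W_{s_i}) = U^h_{s_i}$ would contradict $x_i = \max U^h_{s_i} <_\text{lex} x_{i+1}$.

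For the second claim, the inclusion $U_i \subseteq h^{-1}(V_i)$ is immediate from $h$ being nondecreasing together with the strict monotonicity of $(h(x_i))_{i<\ell}$ established above. For the reverse inclusion, if $h(x) \in V_i$ but $x \notin U_i$, the two possible cases both lead to contradictions: $x \leq_\text{lex} x_{i-1}$ would force $h(x) \leq_\text{lex} h(x_{i-1})$, contradicting $h(x_{i-1}) <_\text{lex} h(x)$, while $x >_\text{lex} x_i$ combined with $h(x) \leq_\text{lex} h(x_i)$ would force $h(x) = h(x_i)$, and hence $x \in h^{-1}(W_{s_i}) = U^h_{s_i}$, contradicting $x_i = \max U^h_{s_i} <_\text{lex} x$.

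The main obstacle I anticipate is the first case analysis, ensuring that $h(x_i) \neq \max b^\omega$, which requires exploiting the upward-closed structure of $h^{-1}(\{\max b^\omega\})$ coming from the monotonicity of $h$; the rest reduces to routine manipulations with the nondecreasing property and the identity $U^h_s = h^{-1}(W_s)$.
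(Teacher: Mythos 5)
Your proposal is correct and rests on the same key identity as the paper: writing $x_i=\max U^h_{s_i}=\max h^{-1}(W_{s_i})$ and deducing $h(x_i)=\max W_{s_i}$ from surjectivity and monotonicity. The difference is in the bookkeeping for $(U_i)_{i<\ell}=(h^{-1}(V_i))_{i<\ell}$: the paper normalizes the $s_i$ to a common length $m$ (padding with $b-1$), orders them lexicographically, and writes each $U_i$ and $V_i$ as the union of the level-$m$ cylinders $U^h_s$, resp.\ $W_s$, over a lex-interval $I_i\subseteq b^m$, so the identity falls out of $U^h_s=h^{-1}(W_s)$; you instead argue pointwise with order and fiber-maximality arguments, which avoids the common-length normalization, and you also make explicit the check $h(x_i)\neq\max b^\omega$ that the paper leaves implicit (via $x_i\in\mathcal{A}_b$). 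One small imprecision: the inclusion $U_i\subseteq h^{-1}(V_i)$ for $i\meg1$ is not literally immediate from $h$ being nondecreasing plus strict monotonicity of $(h(x_i))_{i<\ell}$ --- if $x_{i-1}<_{\mathrm{lex}}x\mik_{\mathrm{lex}}x_i$ one only gets $h(x_{i-1})\mik_{\mathrm{lex}}h(x)$, and ruling out equality needs exactly the fiber argument you use in the reverse inclusion (equality would put $x$ in $h^{-1}(W_{s_{i-1}})=U^h_{s_{i-1}}$, contradicting $x_{i-1}=\max U^h_{s_{i-1}}$); since that argument appears verbatim two sentences later, this is a presentational slip rather than a gap.
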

\begin{proof}
  For every $i<\ell$ we pick $s_i\in b^{<\omega}$ such that $x_i=\max U^h_{s_i}=\max h^{-1}(W_{s_i})$.
  Moreover, we may assume that $s_0,\ldots,s_{\ell-1}$ are of the same length, by extending the shorter ones by $b-1$. Let $m$ be the common length of $s_0,\ldots,s_{\ell-1}$. Since $x_0,\ldots,x_{\ell-1}$ are distinct, we have that $s_0,\ldots,s_{\ell-1}$ are distinct too. Since $h$ is nondecreasing, we have that $s_0<_\text{lex}\ldots<_\text{lex}s_{\ell-1}$. We set $I_0=\{s\in b^m:s\leq_{\text{lex}}s_0\}$ and for every $1\mik i<\ell$ we set $I_i=\{s\in b^m:s_{i-1}<_\text{lex}s\leq_{\text{lex}}s_i\}$ . Then we have that
  $U_i=\cup_{s\in I_i}U^h_s=\cup_{s\in I_i}h^{-1}(W_s)=h^{-1}(\cup_{s\in I_i}W_s)$ for all $i<\ell$. Moreover, since $h$ is onto and increasing, we have that $h(x_i)=\max W_{s_i}$ for all $i<\ell$ and therefore
  $V_i=  \cup_{s\in I_i}W_s$ for all $i<\ell$. Hence $(U_i)_{i<\ell}=(h^{-1}(V_i))_{i<\ell}$ and $(h(x_i))_{i<\ell}=(\max V_i)_{i<\ell}=(\max W_{s_i})_{i<\ell}$ belongs to $[\mathcal{A}_b]^\ell$.
\end{proof}
We will also need the following strengthening of Lemma \ref{finding_function}.
\begin{lem}
  \label{finding_function_2}
  Let $b<\omega$ with $b\meg2$ and $k$ be a positive integer.
  Also let $h\in C^\uparrow_{\text{sur}}(b^\omega)$. Then
  $[Y_h]^{l_k}=\{(\max U^{f\circ h}_{s^k_i})_{i<l_k}: f \in C^\uparrow_{\text{sur}}(b^\omega)\}$.
\end{lem}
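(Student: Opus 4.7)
The plan is to prove the two inclusions of the displayed equality separately. Throughout I rely on two general facts: for any $g \in C^\uparrow_{\text{sur}}(b^\omega)$, iterating property (iii) of Definition \ref{filtering_defn} shows that the sequence $(\max U^g_{s^k_i})_{i \leq l_k}$ is strictly $\leq_\text{lex}$-increasing, and $\max U^g_{s^k_{l_k}} = \max b^\omega$ because $s^k_{l_k} = (b-1)^k$ and $g$ is a nondecreasing surjection.

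For the inclusion $\supseteq$, fix $f \in C^\uparrow_{\text{sur}}(b^\omega)$ and set $g = f \circ h$. By Lemma \ref{lem_copies_Q_functions} one has $Y_g \in [Y_h]^\eta$, hence $Y_g \subseteq Y_h$. For every $i < l_k$ the value $\max U^g_{s^k_i}$ is strictly smaller than $\max b^\omega$ and therefore belongs to $Y_g$, so to $Y_h$. Combined with the strict $\leq_\text{lex}$-monotonicity of the sequence, this gives $(\max U^{f \circ h}_{s^k_i})_{i<l_k} \in [Y_h]^{l_k}$.

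For the inclusion $\subseteq$, fix $\mathbf{x} = (x_i)_{i<l_k} \in [Y_h]^{l_k}$ and apply Lemma \ref{A_b_tuples_close} (with $\ell = l_k$). This yields $(h(x_i))_{i<l_k} \in [\mathcal{A}_b]^{l_k}$ together with clopen intervals $(V_i)_{i<l_k}$ satisfying $U_i = h^{-1}(V_i)$ and $\max U_i = x_i$. Apply Lemma \ref{finding_function} to the tuple $(h(x_i))_{i<l_k}$ to obtain $f \in C^\uparrow_{\text{sur}}(b^\omega)$ with $\max U^f_{s^k_i} = h(x_i)$ for every $i < l_k$ and $\max U^f_{s^k_{l_k}} = \max b^\omega$. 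The partition of $b^\omega$ into nonempty clopen intervals with this prescribed sequence of maxes is unique (as already used inside the proof of Lemma \ref{finding_function}), and a direct comparison with the $V_i$'s from Lemma \ref{A_b_tuples_close} forces $U^f_{s^k_i} = V_i$ for every $i < l_k$. Consequently $U^{f \circ h}_{s^k_i} = h^{-1}(U^f_{s^k_i}) = h^{-1}(V_i) = U_i$, whence $\max U^{f \circ h}_{s^k_i} = \max U_i = x_i$, as required.

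The step I expect to be most delicate is the uniqueness of the clopen-interval partition in the $\subseteq$ direction. It reduces to the observation that if $(P_i)_{i \leq l_k}$ is any partition of $b^\omega$ into clopen intervals with $\max P_0 <_\text{lex} \cdots <_\text{lex} \max P_{l_k}$, then $P_0 = \{y : y \leq_\text{lex} \max P_0\}$ and $P_i = \{y : \max P_{i-1} <_\text{lex} y \leq_\text{lex} \max P_i\}$ for $i \geq 1$, since otherwise distinct $P_j, P_{j'}$ would both contain $\max P_{\min(j,j')}$, violating disjointness. With this uniqueness in hand, the remainder of the proof is a direct assembly of Lemmas \ref{lem_indentification}, \ref{finding_function}, \ref{lem_copies_Q_functions}, and \ref{A_b_tuples_close}.
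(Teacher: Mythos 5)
Your proof is correct and follows essentially the same route as the paper: the easy inclusion is noted directly, and the reverse inclusion goes through Lemma \ref{A_b_tuples_close}, then Lemma \ref{finding_function} applied to $(h(x_i))_{i<l_k}$, the uniqueness of the clopen-interval partition with prescribed maxima, and the identity $U^{f\circ h}_{s^k_i}=h^{-1}(U^f_{s^k_i})=h^{-1}(V_i)=U_i$. Your explicit justification of the "clear" direction via Lemma \ref{lem_copies_Q_functions} and of the partition uniqueness are just fuller versions of steps the paper leaves implicit.
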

\begin{proof}
  Clearly $\{(\max U^{f\circ h}_{s^k_i})_{i<l_k}: f \in C^\uparrow_{\text{sur}}(b^\omega)\}\subseteq[Y_h]^{l_k}$. In order to prove the inverse inclusion,
  let $\mathbf{x}=(x_i)_{i<l_k}\in[Y_h]^{l_k}$. By Lemma \ref{A_b_tuples_close} we have that
  $(h(x_i))_{i<l_k}$ belongs to $[\mathcal{A}_b]^{l_k}$. By Lemma \ref{finding_function} there exists a map
  $f$ in $C^\uparrow_{\text{sur}}(b^\omega)$ such that $(\max U^f_{s_i^k})_{i< l_k}=(h(x_i))_{i<l_k}$.
  We set $U_0=\{x\in b^\omega:x\leq_\text{lex}x_0\}$ and $U_i=\{x\in b^\omega:x_{i-1}<_\text{lex}x\leq_\text{lex}x_i\}$ for all $1\mik i<l_k$.
  We also set $V_0=\{y\in b^\omega:y\leq_\text{lex}h(x_0)\}$ and $V_i=\{y\in b^\omega:h(x_{i-1})<_\text{lex}y\leq_\text{lex}h(x_i)\}$ for all $1\mik i<l_k$.
  Then $(U^f_{s^k_i})_{i<l_k}=(V_i)_{i<l_k}$.
  By Lemma \ref{A_b_tuples_close} we have that $(U_i)_{i<l_k}=(h^{-1}(V_i))_{i<l_k}$.
  Thus $(f\circ h)^{-1}(W_{s^k_i})=h^{-1}(f^{-1}(W_{s^k_i}))=h^{-1}(V_i)=U_i$ for all $i<l_k$.
  Hence $(\max U^{f\circ h}_{s^k_i})_{i<l_k}=\mathbf{x}$ and the proof is complete.
\end{proof}

\begin{proof}
  [Proof of Proposition \ref{counterexample2}]
  Let $k=\lfloor\log_2(1/\ee)\rfloor+1$ and $\ell=b^k-1$. Then $t_\ell=t(\ee)$. Since $\mathcal{A}_b$ and $\mathbb{Q}$ are order isomorphic, by Theorem, we have that  \ref{Devlin_counterexample} there exists a coloring
  $\widetilde{c}:[\mathcal{A}_b]^\ell\to t_\ell$ such that for every
  $Y\in[\mathcal{A}_b]^\eta$, the set $[Y]^\ell$ witnesses all the colors. Let
  $(s_i)_{i=0}^\ell$ the increasing enumeration of the set
  $b^k$ with respect to the lexicographical order on it.
  We define a coloring $c: C^\uparrow_{\text{sur}}(b^\omega)\to t_\ell$ by setting $c(f)=\widetilde{c}((\max U_{s_i}^f)_{i<\ell})$, for all $f$ in $C^\uparrow_{\text{sur}}(b^\omega)$.

  Let $B\subseteq t_\ell$ such that
  there exists a map
  $h\in C^\uparrow_{\text{sur}}(b^\omega)$ satisfying that for every
  $f\in C^\uparrow_{\text{sur}}(b^\omega)$ there exists $g\in  C^\uparrow_{\text{sur}}(b^\omega)$
  with $\rho_\infty(f\circ h, g)<\ee$ and $c(g)\in B$. We need to show that $B=t_\ell$.
  Indeed, let $r< t_\ell$. By the choice of $\widetilde{c}$ there exists
  $\mathbf{x}\in [Y_h]^\ell$ such that $\widetilde{c}(\mathbf{x})=r$. By Lemma \ref{finding_function_2}
  there exists $f \in C^\uparrow_{\text{sur}}(b^\omega)$ such that $(\max U^{f\circ h}_{s_i})_{i<\ell}=\mathbf{x}$ and therefore $c(f\circ h)=\widetilde{c}(\mathbf{x})=r$.
  By Lemma \ref{being_close}
  we have that for every $g\in C^\uparrow_{\text{sur}}(b^\omega)$ satisfying $\rho_\infty(f\circ h, g)<\ee$
  we have that $(\max U^{f\circ h}_{s_i})_{i<\ell}=(\max U^{g}_{s_i})_{i<\ell}$ and therefore $c(f\circ h)=c(g)=r$. Hence $r\in B$ and the proof is complete.
\end{proof}

%-------------------------------------------------------------------%
%                           Bibliography                            %
%-------------------------------------------------------------------%

\end{document}